\renewcommand\@biblabel[1]{[#1]}
\begin{document}
\title{Strong convergence of inertial extragradient algorithms for solving variational inequalities and fixed point problems}
\titlerunning{Inertial extragradient algorithms}
\author{Bing Tan\and Liya Liu\and  Xiaolong Qin}
\authorrunning{B. Tan, L. Liu, X. Qin}

\institute{
Bing Tan 
\at Institute of Fundamental and Frontier Sciences, University of Electronic Science and Technology of China, Chengdu 611731, China  \\
\href{mailto:bingtan72@gmail.com}{bingtan72@gmail.com}
\and Liya Liu
\at School of Mathematical Sciences, University of Electronic Science and Technology of China, Chengdu 611731, China\\
\href{mailto:liya42@qq.com}{liya42@qq.com} 
\and
Xiaolong Qin ${(\textrm{\Letter})}$
\at Department of Mathematics, Hangzhou Normal University, Hangzhou 310000, China \\
\href{mailto:qxlxajh@163.com}{qxlxajh@163.com}
}

\date{Received: date / Accepted: date}

\maketitle

\begin{abstract}
The paper investigates two inertial extragradient algorithms for seeking a common solution to a variational inequality problem involving a monotone and Lipschitz continuous mapping and a fixed point problem with a demicontractive mapping in real Hilbert spaces. Our algorithms only need to calculate the projection on the feasible set once in each iteration. Moreover, they can work well without the prior information of the Lipschitz constant of the cost operator and do not contain any line search process. The strong convergence of the algorithms is established under suitable conditions. Some experiments are presented to illustrate the numerical efficiency of the suggested algorithms and compare them with some existing ones.

\keywords{Variational inequality problem \and Fixed point problem \and Subgradient extragradient method \and Tseng's extragradient method \and Inertial method \and Hybrid steepest descent method}
\subclass {47H09 \and 47H10 \and 47J20  }
\end{abstract}
\section{Introduction}
Throughout this paper, one assumes that $H$ is a real Hilbert space with $\langle \cdot, \cdot\rangle$ and $\|\cdot\|$ as its inner product and induced norm, respectively. Let $C\subset H$ be  convex and closed, and let $P_C$ denote the metric (nearest point) projection of $H$ onto $C$. Let $A: C\rightarrow H$ be a nonlinear operator. The problem of finding the variational inequality of $x^{\dag} \in C$ (VIP for short) is considered as follows:
\begin{equation*}\label{VIP}
\langle Ax^{\dag}, x-x^{\dag}\rangle\geq 0\,, \quad  \forall x\in C\,.
\tag{\text{VIP}}
\end{equation*}
The symbol $\Omega$ represents the solution set of the problem~\eqref{VIP}.

Variational inequality problems provide a useful and indispensable tool for investigating various interesting issues emerging in many areas, such as  social, physics, engineering, economics, network analysis, medical imaging, inverse problems, transportation and many more, see, e.g., \cite{ra,QA,liu1,SYVS,AIY}. Variational inequalities theory has been proven to provide a simple, universal, and consistent structure to deal with possible problems. In the past few decades, researchers have shown tremendous interest in exploring different extensions of the variational inequality problems. Recently, various forms of computational approaches have been developed and proposed to solve the problem of variational inequalities, such as projection-based methods, hybrid steepest descent methods, and Tikhonov regularization methods.  For some related results, the reader can refer to \cite{Cho1,Cho2,dongopt,SILD,tanjnca}.

We concentrate primarily on projection-based approaches in this study. The earliest and cheapest method of projection is called the projected gradient method. This method contains only one iterative process in each iteration, and only needs to calculate one projection on the feasible set. Unfortunately, the convergence condition of this algorithm is very strong, that is, the cost operator is strongly monotone or inverse strongly monotone, which limits the wide use of the algorithm. To prevent the use of such strong assumptions, Korpelevich proposed the extragradient method (EGM)~\cite{EGM}, which can guarantee weak convergence under the condition that the cost operator is only monotone and Lipschitz continuous. Looking back on the extragradient method, it can be seen that EGM needs to evaluate the value of the cost operator twice and calculate two projections on the feasible set in each iteration. It should be remembered that when the feasible set has a complex structure, it may be very expensive to calculate the projection on the feasible set, which will further affect the efficiency of the method used. Next, let us review two notable approaches to overcome this shortcoming. The first one is the Tseng's extragradient method~\cite{tseng} (TEGM for short, it is also known as the forward-backward-forward algorithm), which is a two-step iterative method. In the second step of TEGM, an explicit formula is used to replace the second projection of EGM. So, this method calculates the projection only once on the feasible set in every iteration. Another method is the subgradient extragradient method (SEGM) proposed in~\cite{SEGM}, which is widely considered an improvement of EGM. This method replaces the second projection of EGM with the projection on a half-space. We know that projection on half-space can be calculated by an explicit formula. Therefore, SEGM greatly improves the computational efficiency of EGM.

The second problem that we are interested in is the fixed point problem (FPP). One recalls that the fixed point problem is described as follows:
\begin{equation*}\label{FPP}
\text{find } x^{\dag}\in H \text{ such that }  x^{\dag}= Tx^{\dag}\,,
\tag{\text{FPP}}
\end{equation*}
where $ T: H\rightarrow H $ is a general operator, and its fixed point set is represented as $\Gamma = \{x : Tx=x\}$. We always suppose that the fixed point set of $ T $ is non-empty, i.e., $\Gamma\neq~\emptyset$. Iterative approaches of fixed point problems of nonlinear operators have been bustling fields of study for many scholars mainly due to its applications in engineering and science recently. In recent years, iterative methods of fixed-point estimation for nonexpansive operators and demicontractive operators are studied in~\cite{TZL,liu4,aviv}.

In this paper, we are concerned about finding common solutions of variational inequality problems \eqref{VIP} and fixed point problems \eqref{FPP}. More precisely, we consider the following general problem:
\begin{equation}\label{VIPFPP}
\text{find } x^{\dag}  \text{ such that }  x^{\dag} \in  \Omega \cap \Gamma \,,
\tag{\text{VIPFPP}}
\end{equation}
where $ A: C \rightarrow H $ and $ T: H \rightarrow H $ are two nonlinear operators. The reason for exploring such problems is that they can be applied to mathematical models, and their constraints can be represented as fixed-point problems and/or variational inequality problems. In recent years, researchers have investigated and proposed many efficient iterative approaches to find common solutions for variational inequalities and fixed-point problems. We here list some of the iterative approaches to solve \eqref{VIP} and \eqref{FPP} which motivate us  to introduce our new scheme for solving \eqref{VIPFPP}. Recently, Kraikaew and Saejung~\cite{KS} proposed an algorithm called Halpern subgradient extragradient method (HSEGM) by connecting the subgradient extragradient method with the Halpern method to solve~\eqref{VIPFPP}. Their algorithm is expressed as follows:
\begin{equation*}\label{HSEGM}
\left\{\begin{aligned}
&y^{k}=P_{C}(x^{k}-\psi A x^{k})\,, \\
&H_{k}=\{x \in H:\langle x^{k}-\psi A x^{k}-y^{k}, x-y^{k}\rangle \leq 0\}\,, \\
&z^{k}=\theta_{k} x^{0}+(1-\theta_{k}) P_{H_{k}}(x^{k}-\psi A y^{k})\,, \\
&x^{k+1}=\varphi_{k} x^{k}+(1-\varphi_{k}) T z^{k}\,,
\end{aligned}\right.
\tag{\text{HSEGM}}
\end{equation*}
where $ x^{0} $ represents the initial fixed point, $\{\theta_{k}\}\subset (0, 1)$ satisfies that  $\sum_{k=1}^{\infty}\theta_{k} = \infty$, $\lim_{k\rightarrow\infty}\theta_{k} = 0$, step size $ \psi\in(0,1/L) $, mapping $A: H\rightarrow H$ is $L$-Lipschitz continuous monotone and mapping $T: H \rightarrow H$ is quasi-nonexpansive with $(I-T)$ being demiclosed at zero. Under the assumption of $\Omega \cap \Gamma  \neq \emptyset$, they proved that the sequence $\{x^{k}\}$ formulated by \eqref{HSEGM} converges to an element $u \in \Omega \cap \Gamma  $ in norm, where $u = P_{\Omega \cap \Gamma} x^{0}$. However, HSEGM converges very slowly because it uses the initial point $ x^{0} $ in each iteration. Another method to obtain strong convergence is called the viscosity method. Recently, based on the extragradient-type method and the viscosity method, Thong and Hieu~\cite{TVNA} suggested two extragradient-viscosity algorithms in a Hilbert space for solving~\eqref{VIPFPP}.  Let $ \{x_n\} $ be formulated by:
\begin{equation*}\label{VSEGM}
\left\{\begin{aligned}
&y^{k}=P_{C}(x^{k}-\psi_{k} A x^{k})\,, \\
&H_{k}=\{x \in H:\langle x^{k}-\psi_{k} A x^{k}-y^{k}, x-y^{k}\rangle \leq 0\}\,, \\
&z^{k}=P_{H_{k}}(x^{k}-\psi_{k} A y^{k})\,, \\
&x^{k+1}=\theta_{k} f(x^{k})+(1-\theta_{k})[(1-\varphi_{k}) z^{k}+\varphi_{k} T z^{k}]\,,
\end{aligned}\right.
\tag{\text{VSEGM}}
\end{equation*}
and
\begin{equation*}\label{VTEGM}
\left\{\begin{aligned}
&y^{k}=P_{C}(x^{k}-\psi_{k} A x^{k})\,, \\
&z^{k}=y^{k}-\psi_{k}(A y^{k}-A x^{k})\,, \\
&x^{k+1}=\theta_{k} f(x^{k})+(1-\theta_{k})[(1-\varphi_{k}) z^{k}+\varphi_{k} T z^{k}]\,,
\end{aligned}\right.
\tag{\text{VTEGM}}
\end{equation*}
where algorithms \eqref{VSEGM} and \eqref{VTEGM} update the step size $ \{\psi_{k}\} $ by following:
\[
\psi_{k+1}=\left\{\begin{array}{ll}
\min \left\{\frac{\phi\|x^{k}-y^{k}\|}{\|A x^{k}-A y^{k}\|}, \psi_{k}\right\}, & \text { if } A x^{k}-A y^{k} \neq 0; \\
\psi_{k}, & \text { otherwise},
\end{array}\right.
\]
where $\{\theta_{k}\}\subset (0, 1)$ satisfies that   $\sum_{k=1}^{\infty}\theta_{k} = \infty$, $\lim_{k\rightarrow\infty}\theta_{k} = 0$, $\{\varphi_{k}\}\subset (a, 1-\psi)$ for some $a> 0$ and $ \psi_0 >0$, mapping $A: H\rightarrow H$ is monotone and $L$-Lipschitz continuous, mapping $T: H \rightarrow H$ is $\vartheta$-demicontractive such that $(I-T)$ is demiclosed at zero and mapping $f: H\rightarrow H$ is $ \rho $-contraction with constant $\rho\in [0, 1)$. It was proven that, if $\Omega \cap \Gamma   \neq \emptyset$,  the sequence $\{ x_n\}$ formulated by \eqref{VSEGM}) and \eqref{VTEGM} converges strongly to $u\in \Omega \cap \Gamma  $, where $u = P_{\Omega \cap \Gamma  } \circ f(u)$. Note that \eqref{HSEGM} uses a fixed step size, that is, it needs to know the prior information of Lipschitz constant of mapping $ A $. However, \eqref{VSEGM} and \eqref{VTEGM} do not require the prior information of Lipschitz constants of the mapping, which makes them more flexible in practical applications.

It is worth noting that the above mentioned methods need to calculate at least one projection in every iteration. We know that calculating the value of the projection is equivalent to finding a solution to an optimization problem, which is computationally expensive. A natural problem appears in front of us. Is there any way to prevent calculating projections and solve variational inequalities? Indeed, Yamada~\cite{yama} proposed the hybrid steepest descent method, which is read as follows:
\[
x^{k+1} =  (I-\psi_{k} \phi S)Tx^{k}\,,
\]
where mapping $T: H\rightarrow H$ is nonexpansive, mapping $S: C\rightarrow H$ is $\kappa$-Lipschitz continuous and $\eta$-strong monotone, $0< \phi< 2\eta/\kappa^{2}$ and the sequence $\{\psi_n\}\subseteq (0, 1)$ satisfies some conditions. He proved that the formulated sequence $\{x_n\}$ converges to an element $ x^{\dag} $ in norm, which is a unique solution of the variational inequality $ \langle Ax^{\dag}, y-x^{\dag}\rangle \geq 0, \forall y \in \Gamma$.

Very recently, Tong and Tian~\cite{STEGM} combined the Tseng's extragradient method with the hybrid steepest descent method for  solving~\eqref{VIPFPP}. In addition, they use an adaptive criterion to update the step size. Indeed, the sequence $\{x_n\}$ is expressed in the following form:
\begin{equation*}\label{STEGM}
\left\{\begin{aligned}
&y^{k}=P_{C}(x^{k}-\psi_{k} A x^{k})\,, \\
&z^{k}=y^{k}-\psi_{k}(A y^{k}-A x^{k})\,, \\
&x^{k+1}=(1-\sigma \theta_{k} S) [(1-\varphi_{k}) z^{k}+\varphi_{k} T z^{k}]\,,
\end{aligned}\right.
\tag{\text{STEGM}}
\end{equation*}
where mapping $A: H\rightarrow H$ is monotone and Lipschitz continuous, mapping $T: H \rightarrow H$ is quasi-nonexpansive such that $(I-T)$ is demiclosed at zero, mapping $S: H \rightarrow H$ is $\eta$-strongly monotone and $\kappa$-Lipschitz continuous for $\eta> 0$ and $\kappa> 0$. Furthermore, for any $ \alpha>0 $, $\ell \in (0, 1)$, $\phi \in (0, 1)$, the sequence $\{\psi_{k}\}$ is selected as the maximum $\psi \in\left\{\alpha, \alpha \ell, \alpha \ell^{2}, \ldots\right\}$ satisfying $ \psi\|A x^{k}-A y^{k}\| \leq \phi\|x^{k}-y^{k}\| $.
This update criterion is called the Armijo line search rule. Under some suitable conditions, the sequence $\{x_n\}$ formulated by \eqref{STEGM} converges to $u\in \Omega \cap \Gamma$ in norm, where $u=P_{\Omega \cap \Gamma}(I-\sigma S) u$.  It should be pointed out that using the Armijo-like line search rule may require more computation time, because update the step size in each iteration requires to calculate the value of $ A $ many times.

On the other hand, problems in practical applications have the characteristics of diversity, complexity and large-scale. How to build fast and stable algorithms becomes particularly important. Recently, many scholars have developed various types of inertial algorithms by employing inertial extrapolation techniques. The inertial method is based on the discrete version of the second-order dissipative dynamical system originally proposed by Polyak~\cite{polyak}.  The main feature of the inertial type methods is that they use the previously known sequence information to generate the next iteration point. More precisely, the procedure requires two iteration steps and the second iteration step is implemented through the preceding two iterations. Note that this small change can greatly accelerate the convergence speed of the iterative algorithm. In recent years, this technique has been investigated intensively and implemented successfully to many problems, see, for instance, \cite{FISTA,iFB,SLD,fanopt}.

Encouraged and influenced by the above works, the purpose of this paper is to develop two  inertial extragradient algorithms with new step size for discovering a common solution of the variational inequality problem containing a monotone and Lipschitz continuous mapping and of the fixed point problem with a demicontractive mapping in real Hilbert spaces. Our algorithms consist of four methods: inertial method, subgradient extragradient method, Tseng's extragradient method and hybrid steepest descent method. Our two algorithms only require to calculate the projection on the feasible set once per iteration, which makes them faster. Strong convergence results of the algorithms are established without the prior information of the Lipschitz constant of the operator. Lastly, some computational tests appearing in finite and infinite dimensions are proposed to verify our theoretical results. Our algorithms develop and summarize some of the results in the literature~\cite{KS,TVNA,STEGM}.

The organizational structure of our paper is built up as follows. Some essential definitions and technical lemmas that need to be used are given in the next section. In Section~\ref{sec3}, we propose algorithms and analyze their convergence. Some numerical experiments to verify our theoretical results are presented in Section~\ref{sec4}. At last, in the final section, the paper ends with a simple summary.
\section{Preliminaries}\label{sec2}
Let $C$ be a convex and closed set in a real Hilbert space $H$. The weak convergence and strong convergence of $\{x^{k}\}_{k=1}^{\infty}$ to a point $x$ are represented by $x^{k} \rightharpoonup x$ and $x^{k} \rightarrow x$, respectively. Here we state two inequalities that need to be used in the proofs. For any $x, y \in H$ and $\theta \in \mathbb{R}$, we have
\begin{itemize}
\item $\|x+y\|^{2} \leq\|x\|^{2}+2\langle y, x+y\rangle$;
\item $\|\theta x+(1-\theta) y\|^{2}=\theta\|x\|^{2}+(1-\theta)\|y\|^{2}-\theta(1-\theta)\|x-y\|^{2}$.
\end{itemize}

For every point $x \in H$, there exists a unique nearest point in $C$, which is represented by $P_{C} x$, such that $P_{C}x:= \operatorname{argmin}\{\|x-y\|,\, y \in C\}$. $P_{C}$ is called the metric projection of $H$ onto $C$, and it is a nonexpansive mapping. The following two basic projection properties will be used many times in subsequent proofs.
\begin{itemize}
\item $ \langle x-P_{C} x, y-P_{C} x\rangle \leq 0, \, \forall y \in C $;
\item $\left\|P_{C} x-P_{C} y\right\|^{2} \leq\left\langle P_{C} x-P_{C} y, x-y\right\rangle, \,\forall y \in H$.
\end{itemize}

\begin{definition}
Assume that $T: H \rightarrow H$ is a nonlinear operator with  $\Gamma \neq \emptyset $. Then, $I-T$ is said to be demiclosed at zero if for any $\{x^{k}\}$ in $H$, the following implication holds:
\[
x^{k} \rightharpoonup x \text { and }(I-T) x^{k} \rightarrow 0 \Longrightarrow x \in \Gamma\,.
\]
\end{definition}
\begin{definition}
For any $ x, y \in H, z \in \{x: Mx=x\} $, mapping $M: H \rightarrow H$ is said to be:
\begin{itemize}
\item \emph{nonexpansive} if 
\[
\|M x-M y\| \leq \|x-y\|\,.
\]
\item \emph{$L$-Lipschitz continuous} with $L>0$ if
\[
\|M x-M y\| \leq L\|x-y\|\,.
\]
\item \emph{monotone} if
\[
\langle M x-M y, x-y\rangle \geq 0\,.
\]
\item \emph{quasi-nonexpansive} if
\[
\|M x-z\| \leq\|x-z\|\,.
\]
\item \emph{$\rho$-strictly pseudocontractive} with $0 \leq \rho <1$ if
\[
\|M x-M y\|^{2} \leq\|x-y\|^{2}+\rho\|(I-M) x-(I-M) y\|^{2}\,.
\]
\item \emph{$\vartheta$-demicontractive} with $0 \leq \vartheta <1$ if
\begin{equation}\label{eq21}
\|M x-z\|^{2} \leq\|x-z\|^{2}+\vartheta\|(I-M) x\|^{2}\,,
\end{equation}
or equivalently
\begin{equation}\label{eq23}
\langle M x-z, x-z\rangle \leq\|x-z\|^{2}+\frac{\vartheta-1}{2}\|x-M x\|^{2}\,.
\end{equation}
\end{itemize}
\end{definition}
\begin{remark}
According to the above definitions, we can easily see the following facts:
\begin{itemize}
\item The class of demicontractive mappings includes the class of quasi-nonexpansive
mappings.
\item Every strictly pseudocontractive mapping with a nonempty fixed point set is
demicontractive.
\end{itemize}
\end{remark}

The following three lemmas are crucial to prove the convergence of our algorithms.

\begin{lemma}\label{lem20}
Suppose that the mapping $S: H \rightarrow H$ is $k$-Lipschitz continuous and $\eta$-strongly monotone with $0< \eta \leq k $. Let the mapping $U: H \rightarrow H$ be nonexpansive. Given $\sigma>0 $ and $\theta\in (0,1]$, the mapping $U^{\sigma}: H \rightarrow H$ is defined by $ U^{\sigma} x=(I-\theta \sigma S)(U x),  \forall x \in H $. Then, $U^{\sigma}$ is a contraction mapping provided $\sigma<\frac{2 \eta}{k^{2}}$, i.e.,
\[
\|U^{\sigma} x-U^{\sigma} y\| \leq(1-\theta \gamma)\|x-y\|, \quad \forall x, y \in H\,,
\]
where $\gamma=1-\sqrt{1-\sigma(2 \eta-\sigma k^{2})} \in(0,1)$.
\end{lemma}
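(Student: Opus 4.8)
The plan is to reduce the claim to the classical contraction estimate for the displacement operator $I-\theta\sigma S$ and then absorb the nonexpansive map $U$. First I would write
\[
U^{\sigma}x-U^{\sigma}y=(Ux-Uy)-\theta\sigma\bigl(S(Ux)-S(Uy)\bigr)
\]
and expand the squared norm by the elementary Hilbert-space identity $\|a-b\|^{2}=\|a\|^{2}-2\langle a,b\rangle+\|b\|^{2}$ with $a=Ux-Uy$ and $b=\theta\sigma(S(Ux)-S(Uy))$. Applying $\eta$-strong monotonicity of $S$ to the cross term, $k$-Lipschitz continuity to the last term, and finally nonexpansiveness $\|Ux-Uy\|\le\|x-y\|$, I expect to reach
\[
\|U^{\sigma}x-U^{\sigma}y\|^{2}\le\bigl(1-2\theta\sigma\eta+\theta^{2}\sigma^{2}k^{2}\bigr)\|x-y\|^{2}.
\]

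The heart of the argument, and the step I expect to be the main obstacle, is upgrading this quadratic coefficient to the stated $(1-\theta\gamma)^{2}$ with $\gamma=1-\sqrt{1-\sigma(2\eta-\sigma k^{2})}$. I would establish the scalar inequality $1-2\theta\sigma\eta+\theta^{2}\sigma^{2}k^{2}\le(1-\theta\gamma)^{2}$ for every $\theta\in(0,1]$. Setting $c:=\sigma(2\eta-\sigma k^{2})$, the defining relation $1-\gamma=\sqrt{1-c}$ gives $c=2\gamma-\gamma^{2}$, which rearranges to $2(\sigma\eta-\gamma)=\sigma^{2}k^{2}-\gamma^{2}$. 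Substituting this and cancelling one factor of $\theta>0$, the target inequality collapses to $\theta(\sigma^{2}k^{2}-\gamma^{2})\le\sigma^{2}k^{2}-\gamma^{2}$, which holds since $\theta\le1$ provided $\sigma^{2}k^{2}-\gamma^{2}\ge0$.

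Thus everything hinges on the sign condition $\gamma\le\sigma k$ (equivalently $\sigma\eta\ge\gamma$), and this is exactly where the hypothesis $\eta\le k$ enters. I would check $\sigma\eta\ge\gamma=1-\sqrt{1-c}$ by rewriting it as $\sqrt{1-c}\ge1-\sigma\eta$: if the right-hand side is nonpositive it is immediate, and otherwise squaring reduces it to $\sigma^{2}\eta^{2}\le\sigma^{2}k^{2}$, i.e.\ $\eta\le k$. Along the way I would also record that $\sigma<2\eta/k^{2}$ forces $c>0$ while $\eta\le k$ forces $c\le\eta^{2}/k^{2}\le1$, so that $\gamma\in(0,1]$ and $U^{\sigma}$ is a genuine contraction. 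As an alternative packaging of the scalar step, one can note that $\theta\mapsto\sqrt{1-2\theta\sigma\eta+\theta^{2}\sigma^{2}k^{2}}$ is convex precisely when the discriminant $\sigma^{2}\eta^{2}-\sigma^{2}k^{2}\le0$ (again $\eta\le k$); since this square root meets the line $1-\theta\gamma$ at $\theta=0$ and $\theta=1$, convexity delivers the bound throughout $[0,1]$.
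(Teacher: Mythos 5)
Your proof is correct, but it takes a genuinely different route from the paper's. The paper never confronts the quadratic $1-2\theta\sigma\eta+\theta^{2}\sigma^{2}k^{2}$: it first proves the contraction estimate for the \emph{unrelaxed} operator, $\|(I-\sigma S)(Ux)-(I-\sigma S)(Uy)\|\le\sqrt{1-\sigma(2\eta-\sigma k^{2})}\,\|x-y\|$, using $\eta\le k$ only to check that the radicand is nonnegative, and then inserts $\theta$ via the algebraic identity $(I-\theta\sigma S)w=\theta(I-\sigma S)w+(1-\theta)w$, so that the triangle inequality applied to this convex combination gives $\|U^{\sigma}x-U^{\sigma}y\|\le\theta\sqrt{1-c}\,\|x-y\|+(1-\theta)\|x-y\|=(1-\theta\gamma)\|x-y\|$ at once. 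You instead expand with the relaxed step $\theta\sigma$ and then prove the scalar inequality $1-2\theta\sigma\eta+\theta^{2}\sigma^{2}k^{2}\le(1-\theta\gamma)^{2}$, which forces you to verify the sign condition $\gamma\le\sigma k$; you do this correctly and correctly isolate where $\eta\le k$ enters, but it is precisely the bookkeeping that the paper's convex-combination identity is designed to bypass. (Your closing observation, that $\theta\mapsto\sqrt{1-2\theta\sigma\eta+\theta^{2}\sigma^{2}k^{2}}$ is convex and agrees with the line $1-\theta\gamma$ at $\theta=0$ and $\theta=1$, is essentially the scalar shadow of that operator identity.) One small point worth making explicit in your write-up: before replacing $\|Ux-Uy\|^{2}$ by $\|x-y\|^{2}$ you need the coefficient $1-2\theta\sigma\eta+\theta^{2}\sigma^{2}k^{2}$ to be nonnegative; this again follows from $\eta\le k$, since the coefficient is at least $(1-\theta\sigma\eta)^{2}$.
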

\begin{proof}
Indeed,  it follows that
\[
\begin{aligned}
\|(I-\sigma S)(U x)-(I-\sigma S)(U y)\|^{2}=&\|U x-U y\|^{2}+\sigma^{2}\|S(U x)-S(U y)\|^{2}\\
&-2 \sigma\langle U x-U y,S(U x)-S(U y) \rangle  \\
\leq &\|U x-U y\|^{2}+\sigma^{2} k^{2}\|U x-U y\|^{2}-2 \sigma \eta\|U x-U y\|^{2} \\
=&(1-\sigma(2 \eta-\sigma k^{2}))\|U x-U y\|^{2}\,.
\end{aligned}
\]
It follows from $0<\eta \leq k$ that
\[
1-\sigma(2 \eta-\sigma k^{2})=(\sigma k-\frac{\eta}{k})^{2}+1-\frac{\eta^{2}}{k^{2}} \geq 0\,,
\]
Therefore, we get
\begin{equation*}
\|(I-\sigma S)(U x)-(I-\sigma S)(U y)\| \leq \sqrt{1-\sigma(2 \eta-\sigma k^{2})}\|x-y\|\,.
\end{equation*}
From the definition of $ U^{\sigma} x $, one has
\[
\begin{aligned}
\|U^{\sigma} x-U^{\sigma} y\| &=\|(I-\theta \sigma S)(U x)-(I-\theta \sigma S)(U y)\| \\
&=\|\theta[(I-\sigma S)(U x)-(I-\sigma S)(U y)]+(1-\theta)(U x-U y)\| \\
&\leq \theta\|(I-\sigma S)(U x)-(I-\sigma S)(U y)\|+(1-\theta) \| x-y) \|\,.
\end{aligned}
\]
Thus, we conclude that
\[
\|U^{\sigma} x-U^{\sigma} y\| \leq(1-\theta \gamma)\|x-y\|\,.
\]
where $\gamma=1-\sqrt{1-\sigma(2 \eta-\sigma k^{2})} \in(0,1)$ with $0<\eta \leq k$ and $\sigma<\frac{2 \eta}{k^{2}}$.
\end{proof}
\begin{lemma}[\cite{KS}]\label{lem21}
Assume that mapping $A : H \rightarrow H$ is monotone and $ L $-Lipschitz continuous  on $ C $. Set $T=P_{C}(I-\phi A)$, where $\phi>0$. If $\{x^{k}\} \subset H$ satisfying $x^{k} \rightharpoonup u$ and $x^{k}-T x^{k} \rightarrow 0$. Then $u \in \Omega=\Gamma$.
\end{lemma}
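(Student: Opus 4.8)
The plan is to prove the two assertions in turn: first that the fixed point set $\Gamma=\{x:Tx=x\}$ of $T=P_{C}(I-\phi A)$ coincides with the solution set $\Omega$, and then that the weak limit $u$ actually belongs to this common set.

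The equality $\Omega=\Gamma$ is the routine part. Using the first projection property recorded in Section~\ref{sec2}, a point $p$ satisfies $p=P_{C}(p-\phi A p)$ if and only if $\langle p-\phi A p-p,\,y-p\rangle\leq 0$ for all $y\in C$, which simplifies to $\langle Ap,\,y-p\rangle\geq 0$ for all $y\in C$ because $\phi>0$; this is exactly the statement $p\in\Omega$. Hence $\Gamma=\Omega$, and it remains to show $u\in\Omega$.

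The substantive part is a demiclosedness-type argument. I would first note that $x^{k}\rightharpoonup u$ forces $\{x^{k}\}$ to be bounded, and since $A$ is $L$-Lipschitz, $\{Ax^{k}\}$ is bounded as well; moreover $t^{k}:=Tx^{k}=x^{k}-(x^{k}-Tx^{k})\rightharpoonup u$ with $\{t^{k}\}$ bounded because $x^{k}-Tx^{k}\to 0$. As each $t^{k}\in C$ and $C$ is convex and closed, hence weakly closed, the limit satisfies $u\in C$. Next I would apply the projection inequality to $t^{k}=P_{C}(x^{k}-\phi Ax^{k})$: for every $y\in C$,
\[
\langle x^{k}-\phi Ax^{k}-t^{k},\,y-t^{k}\rangle\leq 0,
\]
which after rearranging and splitting $\langle Ax^{k},y-t^{k}\rangle=\langle Ax^{k},y-x^{k}\rangle+\langle Ax^{k},x^{k}-t^{k}\rangle$ gives
\[
\tfrac{1}{\phi}\langle x^{k}-t^{k},\,y-t^{k}\rangle\leq\langle Ax^{k},\,y-x^{k}\rangle+\langle Ax^{k},\,x^{k}-t^{k}\rangle.
\]
Invoking monotonicity of $A$ in the form $\langle Ay,\,x^{k}-y\rangle\leq\langle Ax^{k},\,x^{k}-y\rangle$, I would then deduce
\[
\langle Ay,\,x^{k}-y\rangle\leq\langle Ax^{k},\,x^{k}-t^{k}\rangle-\tfrac{1}{\phi}\langle x^{k}-t^{k},\,y-t^{k}\rangle.
\]

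The key step is the passage to the limit. On the right both terms tend to $0$, since the factor $x^{k}-t^{k}\to 0$ while $\{Ax^{k}\}$ and $\{y-t^{k}\}$ remain bounded; on the left $Ay$ is fixed and $x^{k}\rightharpoonup u$, so $\langle Ay,\,x^{k}-y\rangle\to\langle Ay,\,u-y\rangle$. This yields $\langle Ay,\,y-u\rangle\geq 0$ for every $y\in C$. Finally I would recover the genuine variational inequality by the standard Minty argument: fixing $y\in C$ and substituting the segment points $y_{t}=u+t(y-u)\in C$ for $t\in(0,1]$, dividing by $t$, and letting $t\to 0^{+}$ while using the (Lipschitz) continuity of $A$ gives $\langle Au,\,y-u\rangle\geq 0$ for all $y\in C$, i.e. $u\in\Omega$. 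The main obstacle is precisely this limiting argument: one must organize the cross terms so that monotonicity converts the ``frozen'' value $Ax^{k}$ into the fixed value $Ay$ \emph{before} taking the weak limit, since the weak limit cannot be passed directly through the nonlinear operator $A$.
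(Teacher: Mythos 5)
Your proof is correct; the paper itself states this lemma without proof, merely citing \cite{KS}, and your argument is essentially the standard one from that reference: the projection characterization $p=P_{C}(p-\phi Ap)\iff\langle Ap,y-p\rangle\geq 0$ identifies $\Gamma$ with $\Omega$, and the Minty-type limit passage (using monotonicity to trade the ``frozen'' $Ax^{k}$ for the fixed $Ay$ before taking weak limits) gives $u\in\Omega$. The one point worth flagging is that your claim that $\{Ax^{k}\}$ is bounded uses Lipschitz continuity of $A$ on all of $H$ (as in Condition (C1) where the lemma is actually applied), not merely on $C$ as the lemma's wording suggests, since the iterates $x^{k}$ need not lie in $C$.
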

\begin{lemma}[\cite{SY2012}]\label{lem23}
Let $\{a^{k}\}$ be a nonnegative real number sequence. The sequence $\{\theta_{k}\}\subset (0,1)$ satisfies $\sum_{k=1}^{\infty} \theta_{k}=\infty$. Assume that the following inequality holds:
\[
a^{k+1} \leq(1-\theta_{k}) a^{k}+\theta_{k} b^{k}, \quad \forall k \geq 1,
\]
where $\{b^{k}\}$ is  a real number sequence  such that  $\limsup _{i \rightarrow \infty} b^{k_{i}} \leq 0$ for every subsequence $\{a^{k_{i}}\}$ of $\{a^{k}\}$ satisfying $\lim \inf _{i \rightarrow \infty}$
$(a^{k_{i}+1}-a^{k_{i}}) \geq~0$. Then $\lim _{k \rightarrow \infty} a^{k}=0$.
\end{lemma}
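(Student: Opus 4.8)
The plan is to show $\lim_{k\to\infty}a^{k}=0$ by proving $\limsup_{k\to\infty}a^{k}\le 0$; since $a^{k}\ge 0$, this is equivalent to the claim. The workhorse is the given recursion rewritten as $\theta_{k}(a^{k}-b^{k})\le a^{k}-a^{k+1}$, combined with the hypothesis that $\limsup_{i}b^{k_{i}}\le 0$ holds along any subsequence with $\liminf_{i}(a^{k_{i}+1}-a^{k_{i}})\ge 0$. I would split the argument into two cases according to whether $\{a^{k}\}$ is eventually monotone.

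In the first case I assume $\{a^{k}\}$ is eventually non-increasing. Being bounded below by $0$, it then converges to some $L\ge 0$, so $a^{k+1}-a^{k}\to 0$ and the full sequence itself meets the subsequence hypothesis, giving $\limsup_{k}b^{k}\le 0$. Telescoping the inequality $\theta_{k}(a^{k}-b^{k})\le a^{k}-a^{k+1}$ shows $\sum_{k}\theta_{k}(a^{k}-b^{k})<\infty$. If $L>0$, then $\liminf_{k}(a^{k}-b^{k})\ge L>0$, forcing $\sum_{k}\theta_{k}(a^{k}-b^{k})=\infty$ because $\sum_{k}\theta_{k}=\infty$---a contradiction. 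Hence $L=0$.

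In the second case $\{a^{k}\}$ is not eventually monotone, so $a^{k}<a^{k+1}$ for infinitely many $k$. Here I would invoke the monotone-subsequence technique of Maing\'e: for all large $k$ put $\tau(k)=\max\{m\le k:a^{m}<a^{m+1}\}$, so that $\tau(k)\to\infty$, $a^{\tau(k)}\le a^{\tau(k)+1}$, and $a^{k}\le a^{\tau(k)+1}$. The sequence $\{a^{\tau(k)}\}$ satisfies $\liminf_{k}(a^{\tau(k)+1}-a^{\tau(k)})\ge 0$, so $\limsup_{k}b^{\tau(k)}\le 0$. Substituting $a^{\tau(k)}\le a^{\tau(k)+1}$ into the recursion collapses it to $a^{\tau(k)}\le b^{\tau(k)}$, whence $\limsup_{k}a^{\tau(k)}\le 0$ and $a^{\tau(k)}\to 0$. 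Feeding this back into $a^{\tau(k)+1}\le a^{\tau(k)}+\theta_{\tau(k)}b^{\tau(k)}$ and using $a^{k}\le a^{\tau(k)+1}$ yields $\limsup_{k}a^{k}\le 0$.

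The hard part will be organizing the subsequence extraction correctly. The hypothesis only controls $b^{k}$ along subsequences whose $a$-differences do not eventually decrease, so one cannot simply pass to a subsequence realizing $\limsup_{k}a^{k}$; the case split---and in particular Maing\'e's construction of $\tau(k)$---is exactly the device that manufactures an admissible subsequence along which the recursion can be collapsed. A secondary technical point is the $\limsup$ bookkeeping with $\theta_{k}\in(0,1)$: one checks that $\limsup_{k}\theta_{\tau(k)}b^{\tau(k)}\le 0$ follows from $\limsup_{k}b^{\tau(k)}\le 0$, and that the telescoping estimate in the first case genuinely uses the divergence $\sum_{k}\theta_{k}=\infty$.
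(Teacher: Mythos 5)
Your argument is correct. Note that the paper itself gives no proof of this statement: Lemma~2.3 is quoted verbatim from Saejung and Yotkaew \cite{SY2012}, so there is nothing internal to compare against; your two-case argument (eventually non-increasing versus Maing\'e's $\tau(k)=\max\{m\le k: a^{m}<a^{m+1}\}$, which manufactures exactly the admissible subsequence along which the recursion collapses to $a^{\tau(k)}\le b^{\tau(k)}$) is essentially the original proof in that reference. The only loose point is the phrase ``$\sum_{k}\theta_{k}(a^{k}-b^{k})<\infty$'' in Case~1, since the terms need not be nonnegative; what the telescoping actually gives is that the partial sums are bounded above by $a^{M}$, which suffices because under the assumption $L>0$ the terms are eventually bounded below by a positive constant, yielding the desired contradiction with $\sum_{k}\theta_{k}=\infty$.
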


\section{Strong convergence of two inertial algorithms}\label{sec3}
In this section, we present two inertial extragradient methods with new step size for searching a common solution of variational inequality problems and fixed point problems and analyze their convergence. The advantage of our two iterative algorithms is that we only need to calculate the projection on the feasible set once in each iteration, and we do not require to know the prior information of the Lipschitz constant of the mapping. Before starting to introduce the algorithms, we first assume that our iteration scheme meets the following five conditions.
\begin{enumerate}[label=(C\arabic*)]
\item The mapping $A: H \rightarrow H$ is monotone and $L$-Lipschitz continuous on $H$. \label{con1}
\item The mapping $T: H \rightarrow H$ is $\vartheta$-demicontractive such that $(I-T)$ is demiclosed at zero. \label{con2}
\item The solution set of our problem is non-empty, i.e., $ \Omega \cap \Gamma \neq \emptyset $. \label{con3}
\item The mapping $S: H \rightarrow H$ is $\eta$-strongly monotone and $ k $-Lipschitz continuous, where $ \eta $ and $ k $ are positive numbers.
\item Let $ \{\zeta_{k}\} $ be a positive sequence satisfies $\lim_{k \rightarrow \infty} \frac{\zeta_{k}}{\theta_{k}}=0$, where $ \{\theta_{k}\}\subset (0,1) $ such that $\sum_{k=1}^{\infty} \theta_{k}=\infty$ and $\lim _{k \rightarrow \infty} \theta_{k}=0$. Let $\left\{\varphi_{k}\right\}$ be a real sequence such that $\varphi_{k} \subset(a,1-\vartheta)$ for some $a>0$. \label{con5}
\end{enumerate}

\subsection{The self adaptive inertial subgradient extragradient algorithm}
So far, we can state our first self-adaptive iterative algorithm, which is motivated by the inertial subgradient extragradient method and the hybrid steepest descent method with a new step size. Our algorithm is described as follows:
\begin{algorithm}[h]
\caption{The self adaptive inertial subgradient extragradient algorithm}
\label{alg1}
\begin{algorithmic}
\STATE {\textbf{Initialization:} Give  $ \xi>0 $, $\psi_{1}>0$, $\phi \in(0,1)$, $\sigma \in(0, \frac{2 \eta}{k^{2}})$. Let $x^{0},x^{1} \in H$ be two initial points.}
\STATE \textbf{Iterative Steps}: Calculate the next iteration point $ x^{k+1} $ as follows:
\STATE \textbf{Step 1.} Given two previously known iteration points $x^{k-1}$ and $x^{k}(k \geq 1) $. Calculate
\[u^{k}=x^{k}+\xi_{k}(x^{k}-x^{k-1})\,,\]
where
\begin{equation}\label{alpha}
\xi_{k}=\left\{\begin{array}{ll}
\min \bigg\{\dfrac{\zeta_{k}}{\|x^{k}-x^{k-1}\|}, \xi\bigg\}, & \text { if } x^{k} \neq x^{k-1}; \\
\xi, & \text { otherwise}.
\end{array}\right.
\end{equation}
\STATE \textbf{Step 2.} Calculate
\[y^{k}=P_{C}(u^{k}-\psi_{k} A u^{k})\,,\]
\STATE \textbf{Step 3.} Calculate
\[z^{k}=P_{H_{k}}(u^{k}-\psi_{k} A y^{k})\,,\]
where $ H_{k}:=\{x \in H \mid\langle u^{k}-\psi_{k} A u^{k}-y^{k}, x-y^{k}\rangle \leq 0\} $.
\STATE \textbf{Step 4.} Calculate
\[x^{k+1}=(1-\sigma \theta_{k} S) q^{k}\,,\]
where $q^{k}=(1-\varphi_{k}) z^{k}+\varphi_{k} T z^{k}$, and update
\begin{equation}\label{lambda2}
\psi_{k+1}=\left\{\begin{array}{ll}
\min \left\{\dfrac{\phi\|u^{k}-y^{k}\|}{\|A u^{k}-A y^{k}\|}, \psi_{k}\right\}, & \text { if } A u^{k}-A y^{k} \neq 0; \\
\psi_{k}, & \text { otherwise}.
\end{array}\right.
\end{equation}
\end{algorithmic}
\end{algorithm}
\begin{remark}\label{rem31}
It follows from \eqref{alpha} and Condition~\ref{con5} that 
\[
\lim _{k \rightarrow \infty} \frac{\xi_{k}}{\theta_{k}}\|x^{k}-x^{k-1}\|=0\,.
\]
Indeed, we obtain $\xi_{k}\|x^{k}-x^{k-1}\| \leq \zeta_{k}$ for all $ k $, which together with $\lim _{k \rightarrow \infty} \frac{\zeta_{k}}{\theta_{k}}=0$ implies that
\[
\lim _{k \rightarrow \infty} \frac{\xi_{k}}{\theta_{k}}\|x^{k}-x^{k-1}\| \leq \lim _{k \rightarrow \infty} \frac{\zeta_{k}}{\theta_{k}}=0\,.
\]
\end{remark}
Before we begin to state our main theorems, the following two lemmas are very helpful for the convergence analysis of the algorithms.
\begin{lemma}\label{lem31}
The sequence $\left\{\psi_{k}\right\}$ formulated by \eqref{lambda2} is  nonincreasing and satisfies
\[
\lim _{k \rightarrow \infty} \psi_{k}=\psi \geq \min \Big\{\psi_{1}, \frac{\phi}{L}\Big\}\,.
\]
\end{lemma}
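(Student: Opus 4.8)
The plan is to split the claim into its two parts---monotonicity and the positive lower bound---and then conclude by the monotone convergence theorem. Both parts read off almost directly from the update rule \eqref{lambda2}, so the argument is short; the only point requiring genuine care is checking that the ratio appearing in \eqref{lambda2} is well defined before I invoke Lipschitz continuity.

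First I would establish that $\{\psi_{k}\}$ is nonincreasing purely by inspecting \eqref{lambda2}. In the branch where $A u^{k}-A y^{k}\neq 0$, the value $\psi_{k+1}$ is a minimum that has $\psi_{k}$ among its two arguments, so $\psi_{k+1}\leq\psi_{k}$; in the complementary branch $\psi_{k+1}=\psi_{k}$. Hence $\psi_{k+1}\leq\psi_{k}$ for every $k$, and the sequence is in particular bounded above by $\psi_{1}$.

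Next I would derive the lower bound, concentrating on the nontrivial branch $A u^{k}-A y^{k}\neq 0$. Here I would first note that this condition forces $u^{k}\neq y^{k}$ (if $u^{k}=y^{k}$ then $A u^{k}=A y^{k}$), so the denominator $\|A u^{k}-A y^{k}\|$ is nonzero and the quotient $\frac{\phi\|u^{k}-y^{k}\|}{\|A u^{k}-A y^{k}\|}$ is genuinely defined. Then, using the $L$-Lipschitz continuity of $A$ from Condition~\ref{con1}, the estimate $\|A u^{k}-A y^{k}\|\leq L\|u^{k}-y^{k}\|$ yields $\frac{\phi\|u^{k}-y^{k}\|}{\|A u^{k}-A y^{k}\|}\geq\frac{\phi}{L}$, and therefore $\psi_{k+1}\geq\min\{\phi/L,\psi_{k}\}$ in this branch; the same one-step inequality holds trivially in the other branch since there $\psi_{k+1}=\psi_{k}$. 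A straightforward induction then upgrades this to the uniform bound $\psi_{k}\geq\min\{\psi_{1},\phi/L\}$ for all $k$: the base case is immediate, and in the inductive step both branches preserve the quantity $\min\{\psi_{1},\phi/L\}$.

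Finally, since $\{\psi_{k}\}$ is nonincreasing and bounded below by the strictly positive number $\min\{\psi_{1},\phi/L\}$, the monotone convergence theorem furnishes a limit $\psi=\lim_{k\to\infty}\psi_{k}$ satisfying $\psi\geq\min\{\psi_{1},\phi/L\}$, which is exactly the assertion. I do not anticipate any real obstacle in this proof; the single subtlety worth flagging is the well-definedness of the ratio in \eqref{lambda2}, i.e. verifying $A u^{k}-A y^{k}\neq 0\Rightarrow u^{k}\neq y^{k}$ before the Lipschitz inequality is applied.
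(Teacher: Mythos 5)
Your proof is correct and follows essentially the same route as the paper's: monotonicity read off from the update rule, the lower bound $\phi/L$ via the Lipschitz estimate $\|Au^{k}-Ay^{k}\|\leq L\|u^{k}-y^{k}\|$, and convergence of a nonincreasing sequence bounded below. The only additions are the explicit well-definedness check of the ratio and the spelled-out induction, both of which the paper leaves implicit.
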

\begin{proof}
It follows from \eqref{lambda2} that $\psi_{k+1} \leq \psi_{k}$ for all $k \in \mathbb{N} $. Hence, $\left\{\psi_{k}\right\}$ is nonincreasing. Furthermore, we get $\|A u^{k}-A y^{k}\| \leq L\|u^{k}-y^{k}\|$ since $A$ is $L$-Lipschitz continuous. Consequently, we can show that
\[
\phi \frac{\|u^{k}-y^{k}\|}{\|A u^{k}-A y^{k}\|} \geq \frac{\phi}{L}, \,\,\text {  if  }\,\, A u^{k} \neq A y^{k}\,.
\]
In view of \eqref{lambda2},  it follows that
\[
\psi_{k} \geq \min \Big\{\psi_{1}, \frac{\phi}{L}\Big\}\,.
\]
Thus, from the sequence $ \{\psi_{k}\} $  is nonincreasing and lower bounded, we get that $\lim _{k \rightarrow \infty} \psi_{k}=\psi \geq \min \big\{\psi_{1}, \frac{\phi}{L}\big\}$.
\end{proof}
\begin{lemma}[\cite{tanarxiv}]\label{lem32}
Suppose that Conditions \ref{con1} and \ref{con3} hold. Let sequence $\{z^{k}\}$ be formulated by Algorithm~\ref{alg1}. Then, for any $x^{\dag} \in \Omega$, we get
\begin{equation}\label{q}
\|z^{k}-x^{\dag}\|^{2} \leq\|u^{k}-x^{\dag}\|^{2}-\big(1-\phi \frac{\psi_{k}}{\psi_{k+1}}\big)\|y^{k}-u^{k}\|^{2}-\big(1-\phi \frac{\psi_{k}}{\psi_{k+1}}\big)\|z^{k}-y^{k}\|^{2}\,.
\end{equation}
\end{lemma}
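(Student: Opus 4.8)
The plan is to establish the bound in \eqref{q} by exploiting the subgradient extragradient structure of the update for $z^k$, namely that $z^k = P_{H_k}(u^k - \psi_k A y^k)$ lies in the half-space $H_k$ and that the solution $x^\dag \in \Omega \subset C$ sits inside $H_k$ as well. First I would use the characterizing projection inequality $\langle (u^k - \psi_k A y^k) - z^k, x^\dag - z^k\rangle \le 0$, which is valid because $x^\dag \in H_k$. Expanding this via the elementary identity $2\langle a,b\rangle = \|a\|^2 + \|b\|^2 - \|a-b\|^2$ and combining with the decomposition $\|z^k - x^\dag\|^2 = \|u^k - x^\dag\|^2 - \|u^k - z^k\|^2 + 2\langle z^k - u^k, z^k - x^\dag\rangle$, I expect to arrive at an expression of the form
\[
\|z^k - x^\dag\|^2 \le \|u^k - x^\dag\|^2 - \|u^k - y^k\|^2 - \|y^k - z^k\|^2 + 2\psi_k \langle A y^k, x^\dag - z^k\rangle .
\]

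The next key step is to control the cross term $2\psi_k\langle A y^k, x^\dag - z^k\rangle$. I would split it as $2\psi_k\langle A y^k, x^\dag - y^k\rangle + 2\psi_k\langle A y^k, y^k - z^k\rangle$. For the first piece, monotonicity of $A$ together with $y^k \in C$ and the variational inequality property of $x^\dag$ gives $\langle A y^k, x^\dag - y^k\rangle \le \langle A x^\dag, x^\dag - y^k\rangle \le 0$, so this term is nonpositive and may be discarded. For the second piece I would use the defining inequality of the half-space $H_k$ evaluated at the point $z^k \in H_k$, which yields $\langle u^k - \psi_k A u^k - y^k,\, z^k - y^k\rangle \le 0$, and combine it with the Cauchy--Schwarz estimate together with the step-size update rule \eqref{lambda2}, which guarantees $\psi_k\|A u^k - A y^k\| \le \phi\,\tfrac{\psi_k}{\psi_{k+1}}\|u^k - y^k\|$.

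**The main obstacle** I anticipate is bookkeeping the inner products so that the factor $\phi\,\tfrac{\psi_k}{\psi_{k+1}}$ emerges symmetrically in front of both $\|y^k - u^k\|^2$ and $\|z^k - y^k\|^2$. Concretely, after inserting $\pm \psi_k A u^k$ to rewrite $2\psi_k\langle A y^k, y^k - z^k\rangle$ in terms of the half-space inequality plus the correction $2\psi_k\langle A u^k - A y^k, z^k - y^k\rangle$, I would bound the correction using Cauchy--Schwarz, the Lipschitz-type step-size control, and the elementary inequality $2ab \le a^2 + b^2$ applied to $\|u^k - y^k\|$ and $\|z^k - y^k\|$. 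This splits the cross term evenly and produces the two matching coefficients $\big(1 - \phi\,\tfrac{\psi_k}{\psi_{k+1}}\big)$ as subtracted terms, giving exactly \eqref{q}. Since this is cited as a lemma from \cite{tanarxiv}, the argument is standard, and I would expect the whole computation to be routine once the correct splitting and the step-size inequality are in place.
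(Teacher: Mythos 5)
Your proposal is correct and follows the standard argument for this lemma (the paper itself gives no proof, citing \cite{tanarxiv}, but your route --- projection characterization at $x^\dag\in C\subset H_k$, monotonicity of $A$ to discard $\langle Ay^k,x^\dag-y^k\rangle$, the half-space inequality at $z^k\in H_k$ after inserting $\pm\psi_k Au^k$, then Cauchy--Schwarz with the step-size bound $\|Au^k-Ay^k\|\le\tfrac{\phi}{\psi_{k+1}}\|u^k-y^k\|$ and $2ab\le a^2+b^2$ --- is exactly the one used there). The only nitpick is that your intermediate display omits the cross term $2\langle u^k-y^k,z^k-y^k\rangle$ arising from expanding $-\|u^k-z^k\|^2$, but your subsequent handling via the half-space inequality accounts for it, so the argument goes through as described.
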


\begin{theorem}\label{thm31}
Suppose that Conditions \ref{con1}--\ref{con5} hold. Then the iterative sequence $\{x^{k}\}$ formulated by Algorithm~\ref{alg1} converges to an element $ x^{\dag}\in \Omega \cap \Gamma $ in norm, where $ x^{\dag} = P_{\Omega \cap \Gamma}(I-\sigma S) x^{\dag} $.
\end{theorem}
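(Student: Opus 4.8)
The plan is to reduce everything to the scalar recursion of Lemma~\ref{lem23} applied to $a^k := \|x^k - x^\dagger\|^2$, so the first task is to record the one-step estimates and establish boundedness. Since $P_{\Omega\cap\Gamma}$ is nonexpansive, Lemma~\ref{lem20} (with $U = P_{\Omega\cap\Gamma}$ and $\theta = 1$) shows that $P_{\Omega\cap\Gamma}(I - \sigma S)$ is a contraction, so the fixed point $x^\dagger$ exists and is unique. Applying the identity $\|\theta x + (1-\theta)y\|^2 = \theta\|x\|^2 + (1-\theta)\|y\|^2 - \theta(1-\theta)\|x-y\|^2$ to $q^k = (1-\varphi_k)z^k + \varphi_k T z^k$ and invoking the demicontractivity bound~\eqref{eq21} gives $\|q^k - x^\dagger\|^2 \leq \|z^k - x^\dagger\|^2 - \varphi_k(1 - \varphi_k - \vartheta)\|z^k - Tz^k\|^2$. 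Combined with Lemma~\ref{lem32} (inequality~\eqref{q}) and Lemma~\ref{lem31} (which gives $\psi_k \to \psi$, hence $1 - \phi\,\psi_k/\psi_{k+1} \to 1 - \phi > 0$), this yields, for all large $k$, the chain $\|q^k - x^\dagger\| \leq \|z^k - x^\dagger\| \leq \|u^k - x^\dagger\|$ once the nonpositive terms are discarded; I record those discarded terms as a single nonnegative quantity $\Theta_k \geq 0$ for later use.

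For boundedness I would write $x^{k+1} - x^\dagger = \big[(I - \sigma\theta_k S)q^k - (I - \sigma\theta_k S)x^\dagger\big] - \sigma\theta_k S x^\dagger$ and apply Lemma~\ref{lem20} with $U = I$ to the bracketed difference, obtaining $\|x^{k+1} - x^\dagger\| \leq (1 - \theta_k\gamma)\|u^k - x^\dagger\| + \sigma\theta_k\|Sx^\dagger\|$, where $\gamma = 1-\sqrt{1-\sigma(2\eta-\sigma k^2)}$. The inertial step gives $\|u^k - x^\dagger\| \leq \|x^k - x^\dagger\| + \theta_k\cdot(\xi_k/\theta_k)\|x^k - x^{k-1}\|$, and Remark~\ref{rem31} makes $(\xi_k/\theta_k)\|x^k - x^{k-1}\|$ a null, hence bounded, sequence; a routine induction then bounds $\{x^k\}$ and consequently $\{u^k\},\{y^k\},\{z^k\},\{q^k\},\{Sq^k\}$.

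Next I would derive the quadratic recursion. Using $\|a+b\|^2 \leq \|a\|^2 + 2\langle b, a+b\rangle$ with the same splitting, the inequality $(1-\theta_k\gamma)^2 \leq 1-\theta_k\gamma$, and the estimate for $\|q^k-x^\dagger\|^2$ with $-\Theta_k$ retained, I reach $a^{k+1} \leq (1 - \theta_k\gamma)a^k - (1-\theta_k\gamma)\Theta_k + \theta_k\gamma\, b^k$, where $b^k = \gamma^{-1}\big[(1-\theta_k\gamma)\varepsilon_k - 2\sigma\langle Sx^\dagger, x^{k+1} - x^\dagger\rangle\big]$ and $\varepsilon_k \to 0$ collects the inertial remainder from $\|u^k - x^\dagger\|^2 \leq \|x^k - x^\dagger\|^2 + \theta_k\varepsilon_k$. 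Dropping the nonpositive term $-(1-\theta_k\gamma)\Theta_k$ puts this in the exact form required by Lemma~\ref{lem23}.

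Finally I verify the subsequence hypothesis of Lemma~\ref{lem23}, which is the crux. Let $\{a^{k_i}\}$ satisfy $\liminf_i (a^{k_i+1} - a^{k_i}) \geq 0$; rearranging the recursion and using $\theta_{k_i} \to 0$ with boundedness forces $\limsup_i (1-\theta_{k_i}\gamma)\Theta_{k_i} \leq 0$, hence $\Theta_{k_i} \to 0$. Since the coefficients inside $\Theta_k$ are eventually bounded away from zero, this delivers $\|y^{k_i} - u^{k_i}\| \to 0$, $\|z^{k_i} - y^{k_i}\| \to 0$, and $\|z^{k_i} - Tz^{k_i}\| \to 0$; together with $\|u^{k_i} - x^{k_i}\| = \xi_{k_i}\|x^{k_i} - x^{k_i-1}\| \to 0$ and $\|q^{k_i} - z^{k_i}\| = \varphi_{k_i}\|z^{k_i} - Tz^{k_i}\| \to 0$ these give $\|z^{k_i} - x^{k_i}\| \to 0$ and $\|x^{k_i+1} - x^{k_i}\| \to 0$. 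Passing to a further subsequence $x^{k_{i_j}} \rightharpoonup u$ attaining $\limsup_i\langle Sx^\dagger, x^\dagger - x^{k_i+1}\rangle$, demiclosedness of $(I-T)$ (Condition~\ref{con2}) with $\|z^{k_i}-Tz^{k_i}\|\to 0$ gives $u \in \Gamma$, while $y^{k_i} = P_C(u^{k_i} - \psi_{k_i}A u^{k_i})$ with $\|u^{k_i} - y^{k_i}\| \to 0$ and $\psi_{k_i} \to \psi > 0$ gives $u \in \Omega$ through Lemma~\ref{lem21} (after replacing $\psi_{k_i}$ by $\psi$ via nonexpansiveness of $P_C$ and boundedness of $\{Au^{k_i}\}$). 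The variational characterization of $x^\dagger = P_{\Omega\cap\Gamma}(I - \sigma S)x^\dagger$ yields $\langle Sx^\dagger, y - x^\dagger\rangle \geq 0$ for all $y \in \Omega\cap\Gamma$; taking $y = u$ and using $\|x^{k_i+1} - x^{k_i}\| \to 0$ shows $\limsup_i\langle Sx^\dagger, x^\dagger - x^{k_i+1}\rangle = \langle Sx^\dagger, x^\dagger - u\rangle \leq 0$, whence $\limsup_i b^{k_i} \leq 0$, and Lemma~\ref{lem23} gives $a^k \to 0$, i.e.\ $x^k \to x^\dagger$. The main obstacle is exactly this paragraph: extracting $\Theta_{k_i} \to 0$ from the $\liminf$ hypothesis and then chaining the asymptotic-regularity estimates to identify the weak cluster point, where the variable step size $\psi_k$ and the behavior of $\varphi_k$ near $1-\vartheta$ demand care in applying Lemma~\ref{lem21} and in keeping the $T$-term bounded below.
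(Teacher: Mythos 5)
Your proposal is correct and follows essentially the same route as the paper's proof: boundedness via the contraction estimate of Lemma~\ref{lem20} and the inertial bound from Remark~\ref{rem31}, the quadratic one-step recursion (your $\Theta_k$ is exactly the paper's Claim~2 residual), and the Saejung--Yotkaew criterion of Lemma~\ref{lem23} verified through the subsequence asymptotic-regularity argument and weak cluster point identification via Lemma~\ref{lem21} and demiclosedness of $I-T$. Your handling of the variable step size when invoking Lemma~\ref{lem21} is in fact slightly more careful than the paper's, but the argument is the same.
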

\begin{proof}
According to Lemma~\ref{lem20}, we get that $(I-\sigma S)$ is a contractive mapping. Therefore, $P_{\Omega \cap \Gamma  }(I-\sigma S)$ is also a contraction mapping. By means of the Banach contraction principle, one concludes that there exists a unique point $x^{\dag} \in H$ such that $x^{\dag}=P_{\Omega \cap \Gamma  }(I-\sigma S) x^{\dag} $. Let $x^{\dag} \in\Omega \cap \Gamma$.

\noindent{\bf Claim 1.} The sequence $\{x^{k}\}$ is bounded. On account of Lemma~\ref{lem31}, we see that $\lim _{k \rightarrow \infty}(1-\phi \frac{\psi_{k}}{\psi_{k+1}})=1-\phi>0$. Hence, $\exists \, k_{0} \in \mathbb{N}$ such that
\begin{equation}\label{eqf}
1-\phi \frac{\psi_{k}}{\psi_{k+1}}>0, \quad  \forall k \geq k_{0}\,.
\end{equation}
Combining Lemma~\ref{lem32} and \eqref{eqf},  it follows that
\begin{equation}\label{eqg}
\|z^{k}-x^{\dag}\| \leq\|u^{k}-x^{\dag}\|, \quad \forall k \geq k_{0}\,.
\end{equation}
According to the definition of $ u^{k} $, we can write
\begin{equation}\label{eqi}
\begin{aligned}
\|u^{k}-x^{\dag}\| 
& \leq\|x^{k}-x^{\dag}\|+\xi_{k}\|x^{k}-x^{k-1}\| \\
&=\|x^{k}-x^{\dag}\|+\theta_{k} \cdot \frac{\xi_{k}}{\theta_{k}}\|x^{k}-x^{k-1}\|\,.
\end{aligned}
\end{equation}
From Remark~\ref{rem31}, one sees that $\frac{\xi_{k}}{\theta_{k}}\|x^{k}-x^{k-1}\| \rightarrow 0$. Therefore, there exists a constant $Q_{1}>0$ such that
\begin{equation}\label{eqj}
\frac{\xi_{k}}{\theta_{k}}\|x^{k}-x^{k-1}\| \leq Q_{1}, \quad \forall k \geq 1\,.
\end{equation}
By \eqref{eqg}, \eqref{eqi} and \eqref{eqj}, we have
\begin{equation}\label{eqm}
\|z^{k}-x^{\dag}\| \leq\|u^{k}-x^{\dag}\| \leq\|x^{k}-x^{\dag}\|+\theta_{k} Q_{1},\quad \forall k \geq k_{0}\,.
\end{equation}
On the other hand, from the definition of $ q^{k} $, \eqref{eq21} and \eqref{eq23}, we get
\begin{equation}\label{eqqa}
\begin{aligned}
\|q^{k}-x^{\dag}\|^{2} 
=&\|(1-\varphi_{k})(z^{k}-x^{\dag})+\varphi_{k}(T z^{k}-x^{\dag})\|\\
=&(1-\varphi_{k})^{2}\|z^{k}-x^{\dag}\|^{2}+\varphi_{k}^{2}\|T z^{k}-x^{\dag}\|^{2}+2(1-\varphi_{k}) \varphi_{k}\langle T z^{k}-x^{\dag}, z^{k}-x^{\dag}\rangle\\ \leq&(1-\varphi_{k})^{2}\|z^{k}-x^{\dag}\|^{2}+\varphi_{k}^{2}\|z^{k}-x^{\dag}\|^{2}+\varphi_{k}^{2} \vartheta\|T z^{k}-z^{k}\|^{2} \\
&+2(1-\varphi_{k}) \varphi_{k}\big[\|z^{k}-x^{\dag}\|^{2}-\frac{1-\vartheta}{2}\|T z^{k}-z^{k}\|^{2}\big] \\
=&\|z^{k}-x^{\dag}\|^{2}+\varphi_{k}[\varphi_{k}-(1-\vartheta)]\|T z^{k}-z^{k}\|^{2}\,.
\end{aligned}
\end{equation}
In view of $ \{\varphi_{k}\}\subset(0,1-\vartheta) $ and  \eqref{eqm}, we get
\begin{equation}\label{eqw}
\|q^{k}-x^{\dag}\| \leq\|u^{k}-x^{\dag}\|\leq\|x^{k}-x^{\dag}\|+\theta_{k} Q_{1},\quad \forall k \geq k_{0}\,.
\end{equation}
Therefore,  on account of Lemma~\ref{lem20} and \eqref{eqw}, we have
\[
\begin{aligned}
\|x^{k+1}-x^{\dag}\| 
& = \|(I-\sigma \theta_{k} S) q^{k}-(I-\sigma \theta_{k} S) p - \sigma \theta_{k} S x^{\dag}\|\\
& \leq\|(I-\sigma \theta_{k} S) q^{k}-(I-\sigma \theta_{k} S) p\|+\sigma \theta_{k}\|S x^{\dag}\| \\
& \leq(1-\gamma \theta_{k})\|q^{k}-x^{\dag}\|+\sigma \theta_{k}\|S x^{\dag}\| \\
& \leq(1-\gamma \theta_{k})\|x^{k}-x^{\dag}\|+\gamma \theta_{k} \frac{\sigma}{\gamma}\|S x^{\dag}\| +\gamma \theta_{k} \frac{Q_{1}}{\gamma}  \\
& \leq \max \Big\{\|x^{k}-x^{\dag}\|, \frac{\sigma\|S x^{\dag}\|+Q_{1}}{\gamma}\Big\} \\
& \leq \cdots  \leq \max \Big\{\|x^{0}-x^{\dag}\|, \frac{\sigma\|S x^{\dag}\|+Q_{1}}{\gamma}\Big\}\,,
\end{aligned}
\]
where $\gamma=1-\sqrt{1-\sigma(2 \eta-\sigma k^{2})} \in(0,1)$. This means that the sequence $\{x^{k}\}$ is bounded. Thus, the sequences $\{y^{k}\},\{z^{k}\},\{q^{k}\}$ and $\left\{(I-\sigma S) x^{k}\right\}$ are also bounded.

\noindent{\bf Claim 2.}
\[
\begin{aligned}
&\quad\varphi_{k}[1-\vartheta-\varphi_{k}]\|z^{k}-T z^{k}\|^{2}+\big(1-\phi \frac{\psi_{k}}{\psi_{k+1}}\big)\|y^{k}-u^{k}\|^{2}+\big(1-\phi \frac{\psi_{k}}{\psi_{k+1}}\big)\|z^{k}-y^{k}\|^{2}\\ &\leq\|x^{k}-x^{\dag}\|^{2}-\|x^{k+1}-x^{\dag}\|^{2}+\theta_{k} Q_{4}\,, \quad \forall k \geq k_{0}
\end{aligned}
\]
for some $ Q_{4}>0 $. Indeed,  on account of Lemma~\ref{lem20} and \eqref{eqqa}, it follows that
\begin{equation}\label{eqaa}
\begin{aligned}
\|x^{k+1}-x^{\dag}\|^2 
& = \|(I-\sigma \theta_{k} S) q^{k}-(I-\sigma \theta_{k} S) p - \sigma \theta_{k} S x^{\dag}\|^2\\ 
& \leq\|(I-\sigma \theta_{k} S) q^{k}-(I-\sigma \theta_{k} S) p\|^2- 2\sigma\theta_{k}\langle S x^{\dag}, x^{k+1}-x^{\dag}\rangle \\
& \leq(1-\gamma \theta_{k})^2\|q^{k}-x^{\dag}\|^2+2\sigma\theta_{k} \langle S x^{\dag}, x^{\dag}-x^{k+1}\rangle\\
& \leq \|q^{k}-x^{\dag}\|^2+ \theta_{k}Q_{2}\\
&\leq \|z^{k}-x^{\dag}\|^{2}+\varphi_{k}[\varphi_{k}-(1-\vartheta)]\|T z^{k}-z^{k}\|^{2}+ \theta_{k}Q_{2}
\end{aligned}
\end{equation}
for some $Q_{2}>0$. In the light of Lemma~\ref{lem32}, one has
\begin{equation}\label{eqn}
\begin{aligned}
\|x^{k+1}-x^{\dag}\|^{2} \leq&\|u^{k}-x^{\dag}\|^{2}-\big(1-\phi \frac{\psi_{k}}{\psi_{k+1}}\big)\|y^{k}-u^{k}\|^{2}-\big(1-\phi \frac{\psi_{k}}{\psi_{k+1}}\big)\|z^{k}-y^{k}\|^{2}\\
&+\varphi_{k}[\varphi_{k}-(1-\vartheta)]\|T z^{k}-z^{k}\|^{2}+ \theta_{k}Q_{2}\,.
\end{aligned}
\end{equation}
In view of \eqref{eqm}, we have
\begin{equation}\label{eqo}
\begin{aligned}
\|u^{k}-x^{\dag}\|^{2} & \leq(\|x^{k}-x^{\dag}\|+\theta_{k} Q_{1})^{2} \\
&=\|x^{k}-x^{\dag}\|^{2}+\theta_{k}(2 Q_{1}\|x^{k}-x^{\dag}\|+\theta_{k} Q_{1}^{2}) \\
& \leq\|x^{k}-x^{\dag}\|^{2}+\theta_{k} Q_{3}
\end{aligned}
\end{equation}
for some $Q_{3}>0 $. From \eqref{eqn} and \eqref{eqo}, we get
\begin{equation*}
\begin{aligned}
\|x^{k+1}-x^{\dag}\|^{2} \leq&\|x^{k}-x^{\dag}\|^{2}-\big(1-\phi \frac{\psi_{k}}{\psi_{k+1}}\big)\|y^{k}-u^{k}\|^{2}-\big(1-\phi \frac{\psi_{k}}{\psi_{k+1}}\big)\|z^{k}-y^{k}\|^{2}\\
&+\varphi_{k}[\varphi_{k}-(1-\vartheta)]\|T z^{k}-z^{k}\|^{2}+ \theta_{k}Q_{2} + \theta_{k}Q_{3}\,.
\end{aligned}
\end{equation*}
which yields
\[
\begin{aligned}
&\quad\varphi_{k}[1-\vartheta-\varphi_{k}]\|z^{k}-T z^{k}\|^{2}+\big(1-\phi \frac{\psi_{k}}{\psi_{k+1}}\big)\|y^{k}-u^{k}\|^{2}+\big(1-\phi \frac{\psi_{k}}{\psi_{k+1}}\big)\|z^{k}-y^{k}\|^{2}\\ &\leq\|x^{k}-x^{\dag}\|^{2}-\|x^{k+1}-x^{\dag}\|^{2}+\theta_{k} Q_{4}\,, \quad \forall k \geq k_{0}\,,
\end{aligned}
\]
where $Q_{4}:=Q_{2}+Q_{3}$.

\noindent{\bf Claim 3.}
\begin{equation*}
\|x^{k+1}-x^{\dag}\|^2
\leq(1-\gamma \theta_{k})\|x^{k}-x^{\dag}\|^2+\gamma\theta_{k} \big[\frac{2\sigma}{\gamma}\langle S x^{\dag}, x^{\dag}-x^{k+1}\rangle+\frac{3Q\xi_{k}}{\gamma\theta_{k} }\|x^{k}-x^{k-1}\|\big],\, \forall k \geq k_{0}\,,
\end{equation*}
Indeed, by the definition of $ \{u^{k}\} $, one obtains
\begin{equation}\label{eqk}
\begin{aligned}
\|u^{k}-x^{\dag}\|^{2}  
& = \|x^{k}+\xi_{k}(x^{k}-x^{k-1})-x^{\dag} \| \\
&=\|x^{k}-x^{\dag}\|^{2}+2 \xi_{k}\langle x^{k}-x^{\dag}, x^{k}-x^{k-1}\rangle+\xi_{k}^{2}\|x^{k}-x^{k-1}\|^{2} \\
&\leq\|x^{k}-x^{\dag}\|^{2} + 3Q\xi_{k}\|x^{k}-x^{k-1}\|\,,
\end{aligned}
\end{equation}
where $Q:=\sup _{k \in \mathbb{N}}\left\{\|x^{k}-x^{\dag}\|, \xi\|x^{k}-x^{k-1}\|\right\}>0$.
Using \eqref{eqw} and \eqref{eqaa}, we obtain
\begin{equation}\label{eqaz}
\|x^{k+1}-x^{\dag}\|^2
\leq(1-\gamma \theta_{k})\|u^{k}-x^{\dag}\|^2+2\sigma\theta_{k} \langle S x^{\dag}, x^{\dag}-x^{k+1}\rangle\,.
\end{equation}
Substituting \eqref{eqk} into \eqref{eqaz}, it follows that
\begin{equation*}
\|x^{k+1}-x^{\dag}\|^2
\leq(1-\gamma \theta_{k})\|x^{k}-x^{\dag}\|^2+\gamma\theta_{k} \big[\frac{2\sigma}{\gamma}\langle S x^{\dag}, x^{\dag}-x^{k+1}\rangle+\frac{3Q\xi_{k}}{\gamma\theta_{k} }\|x^{k}-x^{k-1}\|\big],\, \forall k \geq k_{0}\,,
\end{equation*}

\noindent{\bf Claim 4.} The sequence $\big\{\|x^{k}-x^{\dag}\|^{2}\big\}$ converges to zero. From Lemma~\ref{lem23}, we need to show that $\lim \sup _{i \rightarrow \infty}\langle S x^{\dag}, x^{\dag}-x^{k_{i}+1}\rangle \leq 0$ for every subsequence $\{\|x^{k_{i}}-x^{\dag}\|\}$ of $\{\|x^{k}-x^{\dag}\|\}$ satisfying
\[
\liminf _{i \rightarrow \infty}(\|x^{k_{i}+1}-x^{\dag}\|-\|x^{k_{i}}-x^{\dag}\|) \geq 0\,.
\]

For this purpose, one assumes that $\{\|x^{k_{i}}-x^{\dag}\|\}$ is a subsequence of $\{\|x^{k}-x^{\dag}\|\}$ such that $\liminf _{i \rightarrow \infty}(\|x^{k_{i}+1}-x^{\dag}\|-\|x^{k_{i}}-x^{\dag}\|) \geq 0 $. We obtain
\[\begin{aligned}
&\quad\lim _{i \rightarrow \infty} \inf (\|x^{k_{i}+1}-x^{\dag}\|^{2}-\|x^{k_{i}}-x^{\dag}\|^{2}) \\
&=\liminf _{i \rightarrow \infty}[(\|x^{k_{i}+1}-x^{\dag}\|-\|x^{k_{i}}-x^{\dag}\|)(\|x^{k_{i}+1}-x^{\dag}\|+\|x^{k_{i}}-x^{\dag}\|)] \geq 0\,.
\end{aligned}
\]
From Claim 2 and Condition~\ref{con5}, it follows that
\[
\begin{aligned}
&\quad\limsup_{i \rightarrow \infty}\big[\big(1-\phi \frac{\psi_{k_{i}}}{\psi_{k_{i}+1}}\big)\|y^{k_{i}}-u^{k_{i}}\|^{2}+\big(1-\phi \frac{\psi_{k_{i}}}{\psi_{k_{i}+1}}\big)\|z^{k_{i}}-y^{k_{i}}\|^{2}\big. \\
&\quad+\big. \varphi_{k_{i}}(1-\vartheta-\varphi_{k_{i}})\|T z^{k_{i}}-z^{k_{i}}\|^{2}\big] \\
& \leq \limsup _{i \rightarrow \infty} [\|x^{k_{i}}-x^{\dag}\|^{2}-\|x^{k_{i}+1}-x^{\dag}\|^{2}+\theta_{k_{i}} Q_{4}] \\
& \leq \limsup _{i \rightarrow \infty} [\|x^{k_{i}}-x^{\dag}\|^{2}-\|x^{k_{i}+1}-x^{\dag}\|^{2}] +\limsup _{i \rightarrow \infty}\theta_{k_{i}} Q_{4} \\
&=-\liminf _{i \rightarrow \infty}[\|x^{k_{i}+1}-x^{\dag}\|^{2}-\|x^{k_{i}}-x^{\dag}\|^{2}] \\
& \leq 0\,.
\end{aligned}
\]
Thus, we obtain the following results:
\begin{equation}\label{v}
\lim _{i \rightarrow \infty}\|y^{k_{i}}-u^{k_{i}}\|=0, \,\,\lim _{i \rightarrow \infty}\|z^{k_{i}}-y^{k_{i}}\|=0 \text{ and }  \lim _{i \rightarrow \infty}\|T z^{k_{i}}-z^{k_{i}}\|=0\,.
\end{equation}
Therefore, we have
\begin{equation}\label{eqp}
\lim _{i \rightarrow \infty}\|z^{k_{i}}-u^{k_{i}}\|\leq \lim _{i \rightarrow \infty}\|z^{k_{i}}-y^{k_{i}}\|+ \lim _{i \rightarrow \infty}\|y^{k_{i}}-u^{k_{i}}\| =0\,,
\end{equation}
and
\begin{equation}\label{eqr}
\lim _{i \rightarrow \infty}\|x^{k_{i}}-u^{k_{i}}\|=\lim _{i \rightarrow \infty}\xi_{k_{i}}\|x^{k_{i}}-x^{k_{i}-1}\|=\lim _{i \rightarrow \infty}\theta_{k_{i}} \cdot \frac{\xi_{k_{i}}}{\theta_{k_{i}}}\|x^{k_{i}}-x^{k_{i}-1}\| = 0 \,.
\end{equation}
Combining \eqref{eqp} and \eqref{eqr}, we obtain
\begin{equation}\label{eqqz}
\lim _{i \rightarrow \infty}\|z^{k_{i}}-x^{k_{i}}\|\leq \lim _{i \rightarrow \infty}\|z^{k_{i}}-u^{k_{i}}\|+ \lim _{i \rightarrow \infty}\|u^{k_{i}}-x^{k_{i}}\| =0\,.
\end{equation}
From $q^{k_{i}}=(1-\varphi_{k_{i}}) z^{k_{i}}+\varphi_{k} T z^{k_{i}}$, one sees that
\begin{equation*}
\|q^{k_{i}}-z^{k_{i}}\| \leq \varphi_{k_{i}}\|T z^{k_{i}}-z^{k_{i}}\| \leq(1-\vartheta)\|T z^{k_{i}}-z^{k_{i}}\|\,.
\end{equation*}
In view of \eqref{v}, we get
\begin{equation}\label{h}
\lim _{i \rightarrow \infty}\|q^{k_{i}}-z^{k_{i}}\|=0\,.
\end{equation}
Moreover,  
\begin{equation}\label{eqq}
\|x^{k_{i}+1}-q^{k_{i}}\|=\sigma\theta_{k_{i}}\|S q^{k_{i}}\| \rightarrow 0 \text { as } k \rightarrow \infty\,.
\end{equation}
By \eqref{eqqz}, \eqref{h} and \eqref{eqq}, we obtain
\begin{equation}\label{equ}
\|x^{k_{i}+1}-x^{k_{i}}\| \leq\|x^{k_{i}+1}-q^{k_{i}}\|+\|q^{k_{i}}-z^{k_{i}}\|+\|z^{k_{i}}-x^{k_{i}}\| \rightarrow 0 \text { as } k \rightarrow \infty\,.
\end{equation}
It follows from $\{x^{k_{i}}\}$ is bounded that there is a subsequence $\{x^{k_{i_{j}}}\}$ of $\{x^{k_{i}}\}$ such that $x^{k_{i_{j}}} \rightharpoonup z$, where $z \in H$. From~\eqref{eqr}, we get $u^{k_{i}} \rightharpoonup z$ as $k \rightarrow \infty$. This together with $\lim _{i \rightarrow \infty}\|u^{k_{i}}-y^{k_{i}}\|=0$ and Lemma~\ref{lem21} implies that $z \in \Omega \cap \Gamma$. According to the definition of  $x^{\dag}=P_{\Omega \cap \Gamma}(I-\sigma S) x^{\dag}$, using the property of projection, one has $ \langle(I-\sigma S) x^{\dag}-x^{\dag}, z-x^{\dag}\rangle \leq 0 $. Thus,  we get
\begin{equation}\label{eqt}
\limsup _{i \rightarrow \infty}\langle S x^{\dag}, x^{\dag}-x^{k_{i}}\rangle=\lim _{j \rightarrow \infty}\langle S x^{\dag}, x^{\dag}-x^{k_{i_{j}}}\rangle=\langle S x^{\dag}, x^{\dag}-z\rangle\leq 0\,.
\end{equation}
Combining \eqref{equ} and \eqref{eqt}, we obtain
\begin{equation}\label{eqv}
\limsup _{i \rightarrow \infty}\langle S x^{\dag}, x^{\dag}-x^{k_{i}+1}\rangle =\limsup _{i \rightarrow \infty} \langle S x^{\dag}, x^{\dag}-x^{k_{i}}\rangle\leq 0\,.
\end{equation}
Hence, combining \eqref{eqv}, $\lim _{k \rightarrow \infty} \frac{\xi_{k}}{\theta_{k}}\|x^{k}-x^{k-1}\|=0$, Claim 3 and Lemma~\ref{lem23}, it follows that $\lim _{k \rightarrow \infty}\|x^{k}-x^{\dag}\|=~0$, namely, $ x^{k} \rightarrow x^{\dag} $. We have thus proved the theorem.
\end{proof}

Next, we state a particular situation of Algorithm~\ref{alg1}. When $ S(x)=x-x^{0} $ ($ x^{0} $ is an initial point) in Theorem~\ref{thm31}. It can be easily  checked that mapping $ S: H \rightarrow H $ is strongly monotone and Lipschitz continuous with modulus $ \eta = k =1 $. In this situation, by selecting $ \sigma=1 $, we obtain a new self-adaptive inertial Mann-type Halpern subgradient extragradient algorithm to solve \eqref{VIPFPP}. More specifically, we have the following result.
\begin{corollary}\label{coro1}
Suppose that mapping $A: H \rightarrow H$ is $L$-Lipschitz continuous monotone and mapping $T: H \rightarrow H$ is $\vartheta$-demicontractive such that $(I-T)$ is demiclosed at zero. Give $ \xi>0 $, $\psi_{1}>0$, $\phi \in(0,1)$. Let sequence $ \{\zeta_{k}\} $ be positive numbers such that $\lim_{k \rightarrow \infty} \frac{\zeta_{k}}{\theta_{k}}=0$, where $ \{\theta_{k}\}\subset (0,1) $ satisfies $\lim _{k \rightarrow \infty} \theta_{k}=0$ and  $\sum_{k=0}^{\infty} \theta_{k}=\infty $. Let $\left\{\varphi_{k}\right\}$ be a real sequence such that $\varphi_{k} \subset(a,1-\vartheta)$ for some $a>0$. With two start points $x^{0},x^{1} \in H$, the sequence $\{x^{k}\}$ is defined by
\begin{equation}\label{eqx}
\left\{\begin{aligned}
&u^{k}=x^{k}+\xi_{k}(x^{k}-x^{k-1})\,,\\
&y^{k}=P_{C}(u^{k}-\psi_{k} A u^{k})\,, \\
&z^{k}=P_{H_{k}}(u^{k}-\psi_{k} A y^{k})\,, \\
&H_{k}:=\{x \in H \mid\langle u^{k}-\psi_{k} A u^{k}-y^{k}, x-y^{k}\rangle \leq 0\}\,, \\
&x^{k+1}=\theta_{k} x^{0}+(1-\theta_{k}) [(1-\varphi_{k}) z^{k}+\varphi_{k} T z^{k}]\,, 
\end{aligned}\right.
\end{equation}
where inertial parameter $ \xi_{k} $ and step size $ \psi_{k} $ are defined \eqref{alpha} and \eqref{lambda2}, respectively. Then the iteration sequence $\{x^{k}\}$ formulated by \eqref{eqx} converges to $x^{\dag} \in \Omega \cap \Gamma$ in norm, where $x^{\dag}=P_{\Omega\cap \Gamma} x^{0}$.
\end{corollary}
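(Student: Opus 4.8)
The plan is to obtain Corollary~\ref{coro1} as a direct specialization of Theorem~\ref{thm31}, so the main work is simply to verify that the particular choice $S(x)=x-x^{0}$ together with $\sigma=1$ fits the hypotheses of that theorem and to identify the resulting fixed point. First I would check Condition~\ref{con2}--\ref{con5} for this data. The assumptions on $A$ and $T$ in the corollary are exactly \ref{con1}--\ref{con3}, and the sequences $\{\zeta_{k}\}$, $\{\theta_{k}\}$, $\{\varphi_{k}\}$ are chosen to satisfy \ref{con5}. It remains to confirm Condition~\ref{con4}: for $S(x)=x-x^{0}$ one has
\[
\|Sx-Sy\|=\|(x-x^{0})-(y-x^{0})\|=\|x-y\|
\]
and
\[
\langle Sx-Sy,\,x-y\rangle=\langle x-y,\,x-y\rangle=\|x-y\|^{2},
\]
so $S$ is $\eta$-strongly monotone and $k$-Lipschitz continuous with $\eta=k=1$. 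Consequently $\frac{2\eta}{k^{2}}=2$, and the prescribed value $\sigma=1$ indeed lies in $(0,\tfrac{2\eta}{k^{2}})$, as required in the Initialization of Algorithm~\ref{alg1}.

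Next I would show that the recursion \eqref{eqx} is literally Algorithm~\ref{alg1} for this data. Steps~1--3 of Algorithm~\ref{alg1}, together with the updates \eqref{alpha} and \eqref{lambda2}, coincide verbatim with the first four lines of \eqref{eqx}. For Step~4, substituting $S(x)=x-x^{0}$ and $\sigma=1$ gives
\[
x^{k+1}=(I-\sigma\theta_{k}S)q^{k}=q^{k}-\theta_{k}\bigl(q^{k}-x^{0}\bigr)=\theta_{k}x^{0}+(1-\theta_{k})q^{k},
\]
with $q^{k}=(1-\varphi_{k})z^{k}+\varphi_{k}Tz^{k}$, which is precisely the last line of \eqref{eqx}. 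Hence the sequence produced by \eqref{eqx} is the sequence produced by Algorithm~\ref{alg1}, and Theorem~\ref{thm31} applies: $\{x^{k}\}$ converges strongly to the unique point $x^{\dag}$ with $x^{\dag}=P_{\Omega\cap\Gamma}(I-\sigma S)x^{\dag}$.

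The only nonroutine step, and the one I expect to be the crux, is converting this implicit fixed point characterization into the explicit projection $P_{\Omega\cap\Gamma}x^{0}$. The key observation is that for $S(x)=x-x^{0}$ and $\sigma=1$ the operator $I-\sigma S$ is the constant map, since
\[
(I-\sigma S)x=x-\bigl(x-x^{0}\bigr)=x^{0}\qquad\text{for every }x\in H.
\]
In particular $(I-\sigma S)x^{\dag}=x^{0}$, independent of $x^{\dag}$, so the equation $x^{\dag}=P_{\Omega\cap\Gamma}(I-\sigma S)x^{\dag}$ collapses to $x^{\dag}=P_{\Omega\cap\Gamma}x^{0}$. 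This simultaneously exhibits the limit explicitly and confirms its existence and uniqueness (it is the metric projection of $x^{0}$ onto the closed convex set $\Omega\cap\Gamma$, which is nonempty by~\ref{con3}). Combining this with the strong convergence from Theorem~\ref{thm31} completes the proof.
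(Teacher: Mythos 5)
Your proposal is correct and follows exactly the route the paper takes: the paper also obtains Corollary~\ref{coro1} by specializing Theorem~\ref{thm31} to $S(x)=x-x^{0}$ and $\sigma=1$, noting that $S$ is then strongly monotone and Lipschitz with $\eta=k=1$ and that the update collapses to the Halpern-type step. Your explicit verification that $(I-\sigma S)$ becomes the constant map $x\mapsto x^{0}$, so that the fixed point characterization reduces to $x^{\dag}=P_{\Omega\cap\Gamma}x^{0}$, is precisely the (unstated) justification behind the paper's claim.
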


\subsection{The self adaptive inertial Tseng's extragradient algorithm}
Next, we introduce a new self-adaptive inertial Tseng's extragradient algorithm to solve~\eqref{VIPFPP}. The advantage of this algorithm is that only one projection needs to be calculated in each iteration, and it can work without prior information of Lipschitz constant of the mapping. This algorithm is read as follows.
\begin{algorithm}[H]
\caption{The self adaptive inertial Tseng's extragradient algorithm}
\label{alg2}
\begin{algorithmic}
\STATE {\textbf{Initialization:} Give  $ \xi>0 $, $\psi_{1}>0$, $\phi \in(0,1)$, $\sigma \in(0, \frac{2 \eta}{k^{2}})$. Let $x^{0},x^{1} \in H$ be two initial points.}
\STATE \textbf{Iterative Steps}: Calculate the next iteration point $ x^{k+1} $ as follows:
\STATE \textbf{Step 1.} Given two previously known iteration points $x^{k-1}$ and $x^{k}(k \geq 1) $. Calculate
\[u^{k}=x^{k}+\xi_{k}(x^{k}-x^{k-1})\,,\]
where
\begin{equation*}\label{alpha1}
\xi_{k}=\left\{\begin{array}{ll}
\min \bigg\{\dfrac{\zeta_{k}}{\|x^{k}-x^{k-1}\|}, \xi\bigg\}, & \text { if } x^{k} \neq x^{k-1}; \\
\xi, & \text { otherwise}.
\end{array}\right.
\end{equation*}
\STATE \textbf{Step 2.} Calculate
\[y^{k}=P_{C}(u^{k}-\psi_{k} A u^{k})\,,\]
\STATE \textbf{Step 3.} Calculate
\[z^{k}=y^{k}-\psi_{k}(A y^{k}-A u^{k})\,,\]
\STATE \textbf{Step 4.} Calculate
\[x^{k+1}=(1-\sigma \theta_{k} S) q^{k}\,,\]
where $q^{k}=(1-\varphi_{k}) z^{k}+\varphi_{k} T z^{k}$, and update
\begin{equation*}
\psi_{k+1}=\left\{\begin{array}{ll}
\min \left\{\dfrac{\phi\|u^{k}-y^{k}\|}{\|A u^{k}-A y^{k}\|}, \psi_{k}\right\}, & \text { if } A u^{k}-A y^{k} \neq 0; \\
\psi_{k}, & \text { otherwise}.
\end{array}\right.
\end{equation*}
\end{algorithmic}
\end{algorithm}

The following lemma is very useful for studying the convergence of the Algorithm~\ref{alg2}.
\begin{lemma}[\cite{tanarxiv}]\label{lem41}
Suppose that Conditions \ref{con1} and \ref{con3} hold. Let sequence $\{z^{k}\}$ be formulated by Algorithm~\ref{alg2}. Then,  it follows that
\begin{equation*}
\|z^{k}-x^{\dag}\|^{2} \leq\|u^{k}-x^{\dag}\|^{2}-\big(1-\phi^{2} \frac{\psi_{k}^{2}}{\psi_{k+1}^{2}}\big)\|u^{k}-y^{k}\|^{2},\quad \forall x^{\dag} \in \Omega\,,
\end{equation*}
and
\begin{equation*}
\|z^{k}-y^{k}\| \leq \phi \frac{\psi_{k}}{\psi_{k+1}}\|u^{k}-y^{k}\| \,.
\end{equation*}
\end{lemma}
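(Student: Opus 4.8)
The plan is to expand $\|z^{k}-x^{\dag}\|^{2}$ directly from the Tseng correction $z^{k}=y^{k}-\psi_{k}(Ay^{k}-Au^{k})$ and then dispose of the resulting cross terms using the two structural facts at our disposal: the variational characterization of the projection defining $y^{k}$, and the monotonicity of $A$ together with the fact that $x^{\dag}$ solves \eqref{VIP}. First I would write
\[
\|z^{k}-x^{\dag}\|^{2}=\|y^{k}-x^{\dag}\|^{2}-2\psi_{k}\langle y^{k}-x^{\dag},Ay^{k}-Au^{k}\rangle+\psi_{k}^{2}\|Ay^{k}-Au^{k}\|^{2},
\]
and then replace $\|y^{k}-x^{\dag}\|^{2}$ using the elementary identity $\|y^{k}-x^{\dag}\|^{2}=\|u^{k}-x^{\dag}\|^{2}-\|u^{k}-y^{k}\|^{2}+2\langle y^{k}-u^{k},y^{k}-x^{\dag}\rangle$, so that the desired leading term $\|u^{k}-x^{\dag}\|^{2}-\|u^{k}-y^{k}\|^{2}$ is already isolated and all that remains is to control the collected cross terms and the single squared term.

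The heart of the argument is to regroup the two inner-product contributions as $2\langle y^{k}-x^{\dag},\,y^{k}-u^{k}+\psi_{k}Au^{k}\rangle-2\psi_{k}\langle y^{k}-x^{\dag},Ay^{k}\rangle$ and bound each piece separately. For the first piece, since $y^{k}=P_{C}(u^{k}-\psi_{k}Au^{k})$ and $x^{\dag}\in\Omega\subset C$, the projection inequality $\langle u^{k}-\psi_{k}Au^{k}-y^{k},x^{\dag}-y^{k}\rangle\le 0$ rewrites precisely as $\langle y^{k}-x^{\dag},\,y^{k}-u^{k}+\psi_{k}Au^{k}\rangle\le 0$. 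For the second piece, I would use that $x^{\dag}$ solves \eqref{VIP}, so $\langle Ax^{\dag},y^{k}-x^{\dag}\rangle\ge 0$, and then monotonicity of $A$ (Condition~\ref{con1}) to upgrade this to $\langle Ay^{k},y^{k}-x^{\dag}\rangle\ge\langle Ax^{\dag},y^{k}-x^{\dag}\rangle\ge 0$; since $\psi_{k}>0$, this makes $-2\psi_{k}\langle y^{k}-x^{\dag},Ay^{k}\rangle\le 0$. Hence both cross terms drop out and we are left with
\[
\|z^{k}-x^{\dag}\|^{2}\le\|u^{k}-x^{\dag}\|^{2}-\|u^{k}-y^{k}\|^{2}+\psi_{k}^{2}\|Ay^{k}-Au^{k}\|^{2}.
\]

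To finish the first claim I would invoke the step size rule: by the definition of $\psi_{k+1}$ one always has $\psi_{k+1}\|Au^{k}-Ay^{k}\|\le\phi\|u^{k}-y^{k}\|$, hence $\|Ay^{k}-Au^{k}\|\le(\phi/\psi_{k+1})\|u^{k}-y^{k}\|$; substituting this into the last display produces the factor $\phi^{2}\psi_{k}^{2}/\psi_{k+1}^{2}$ and yields the stated estimate. The second inequality is then immediate: from $z^{k}-y^{k}=-\psi_{k}(Ay^{k}-Au^{k})$ and the very same bound, $\|z^{k}-y^{k}\|=\psi_{k}\|Ay^{k}-Au^{k}\|\le\phi(\psi_{k}/\psi_{k+1})\|u^{k}-y^{k}\|$.

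The main obstacle is the bookkeeping in the cross-term step: one must pair the projection inequality with exactly the combination $y^{k}-u^{k}+\psi_{k}Au^{k}$, and separately route the $Ay^{k}$ term through \eqref{VIP} plus monotonicity, rather than attempting to estimate $\langle y^{k}-x^{\dag},Ay^{k}-Au^{k}\rangle$ as a single object. A minor but essential point is that the step used in the correction is $\psi_{k}$ while the controlling bound carries $\psi_{k+1}$, which is exactly what produces the ratio $\psi_{k}/\psi_{k+1}$; one should also observe that $\psi_{k+1}\|Au^{k}-Ay^{k}\|\le\phi\|u^{k}-y^{k}\|$ holds in both branches of the update rule, so the case $Au^{k}=Ay^{k}$ needs no separate treatment.
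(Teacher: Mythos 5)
Your argument is correct and is exactly the standard derivation of this estimate: expand $\|z^k-x^\dag\|^2$ from the Tseng correction, absorb the cross terms via the projection inequality for $y^k=P_C(u^k-\psi_k Au^k)$ together with monotonicity of $A$ and $x^\dag\in\Omega$, and then bound $\|Au^k-Ay^k\|$ by $(\phi/\psi_{k+1})\|u^k-y^k\|$ from the step-size rule. The paper does not reproduce a proof of Lemma~\ref{lem41} (it cites it from \cite{tanarxiv}), but your reasoning matches the proof given there, including the correct observation that the bound holds in both branches of the update rule.
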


\begin{theorem}\label{thm41}
Suppose that Conditions \ref{con1}--\ref{con5} hold. Then the iterative sequence $\{x^{k}\}$ formulated by Algorithm~\ref{alg2} converges to an element $ x^{\dag}\in \Omega \cap \Gamma $ in norm, where $ x^{\dag} = P_{\Omega \cap \Gamma}(I-\sigma S) x^{\dag} $.
\end{theorem}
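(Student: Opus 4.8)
The plan is to replicate, almost verbatim, the four-claim structure of the proof of Theorem~\ref{thm31}, replacing the role of Lemma~\ref{lem32} by Lemma~\ref{lem41}, since the only structural difference between the two algorithms is the way $z^{k}$ is generated (Tseng's explicit correction $z^{k}=y^{k}-\psi_{k}(Ay^{k}-Au^{k})$ in place of the half-space projection). First I would invoke Lemma~\ref{lem20} to see that $(I-\sigma S)$ is a contraction, so $P_{\Omega\cap\Gamma}(I-\sigma S)$ is a contraction and the Banach fixed-point theorem furnishes a unique $x^{\dag}$ with $x^{\dag}=P_{\Omega\cap\Gamma}(I-\sigma S)x^{\dag}$; I fix this $x^{\dag}\in\Omega\cap\Gamma$. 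Since the step-size rule in Algorithm~\ref{alg2} is identical to \eqref{lambda2}, Lemma~\ref{lem31} still applies, so $\psi_{k}/\psi_{k+1}\to 1$ and hence $1-\phi^{2}\psi_{k}^{2}/\psi_{k+1}^{2}\to 1-\phi^{2}>0$; thus there is $k_{0}$ with $1-\phi^{2}\psi_{k}^{2}/\psi_{k+1}^{2}>0$ for all $k\ge k_{0}$.

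For Claim~1 (boundedness), the first inequality of Lemma~\ref{lem41} then gives $\|z^{k}-x^{\dag}\|\le\|u^{k}-x^{\dag}\|$ for $k\ge k_{0}$. From here the computation \eqref{eqqa} for $q^{k}$, which uses only the demicontractivity \eqref{eq21}--\eqref{eq23} and $\varphi_{k}\subset(0,1-\vartheta)$, yields $\|q^{k}-x^{\dag}\|\le\|z^{k}-x^{\dag}\|$; the inertial bound \eqref{eqi} with Remark~\ref{rem31} gives $\|u^{k}-x^{\dag}\|\le\|x^{k}-x^{\dag}\|+\theta_{k}Q_{1}$; and Lemma~\ref{lem20} applied to $x^{k+1}=(I-\sigma\theta_{k}S)q^{k}$ closes the induction exactly as before, so $\{x^{k}\}$ and then $\{y^{k}\},\{z^{k}\},\{q^{k}\}$ are bounded.

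For Claims~2 and~3 I assemble the descent and recursion inequalities. Combining \eqref{eqqa} with the main inequality of Lemma~\ref{lem41} and the expansion \eqref{eqk}, I expect to obtain, for $k\ge k_{0}$,
\begin{equation*}
\varphi_{k}(1-\vartheta-\varphi_{k})\|z^{k}-Tz^{k}\|^{2}+\Big(1-\phi^{2}\tfrac{\psi_{k}^{2}}{\psi_{k+1}^{2}}\Big)\|u^{k}-y^{k}\|^{2}\le\|x^{k}-x^{\dag}\|^{2}-\|x^{k+1}-x^{\dag}\|^{2}+\theta_{k}Q_{4},
\end{equation*}
together with the Lemma~\ref{lem23}-type recursion
\begin{equation*}
\|x^{k+1}-x^{\dag}\|^{2}\le(1-\gamma\theta_{k})\|x^{k}-x^{\dag}\|^{2}+\gamma\theta_{k}\Big[\tfrac{2\sigma}{\gamma}\langle Sx^{\dag},x^{\dag}-x^{k+1}\rangle+\tfrac{3Q\xi_{k}}{\gamma\theta_{k}}\|x^{k}-x^{k-1}\|\Big].
\end{equation*}
These are the analogues of Claims~2 and~3; the only change is the single Tseng term $(1-\phi^{2}\psi_{k}^{2}/\psi_{k+1}^{2})\|u^{k}-y^{k}\|^{2}$ in place of the two subgradient-extragradient terms. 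Finally, Claim~4 applies Lemma~\ref{lem23}: along any subsequence with $\liminf_{i}(\|x^{k_{i}+1}-x^{\dag}\|-\|x^{k_{i}}-x^{\dag}\|)\ge 0$, the displayed descent inequality forces $\|u^{k_{i}}-y^{k_{i}}\|\to 0$ and $\|Tz^{k_{i}}-z^{k_{i}}\|\to 0$, while the \emph{second} inequality of Lemma~\ref{lem41} gives $\|z^{k_{i}}-y^{k_{i}}\|\to 0$; hence $\|z^{k_{i}}-u^{k_{i}}\|\to 0$, and Remark~\ref{rem31} gives $\|x^{k_{i}}-u^{k_{i}}\|\to 0$, so $\|x^{k_{i}+1}-x^{k_{i}}\|\to 0$. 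Passing to a weakly convergent sub-subsequence $u^{k_{i_{j}}}\rightharpoonup z$ and using $\|u^{k_{i}}-y^{k_{i}}\|\to 0$ with Lemma~\ref{lem21} yields $z\in\Omega$; since $z^{k_{i}}\rightharpoonup z$ and $\|Tz^{k_{i}}-z^{k_{i}}\|\to 0$, the demiclosedness in Condition~\ref{con2} gives $z\in\Gamma$. The projection characterisation of $x^{\dag}$ then delivers $\limsup_{i}\langle Sx^{\dag},x^{\dag}-x^{k_{i}+1}\rangle\le 0$, and Lemma~\ref{lem23} concludes $x^{k}\to x^{\dag}$.

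I expect the main obstacle to be precisely where it was for Theorem~\ref{thm31}: identifying the weak subsequential limit $z$ as an element of $\Omega\cap\Gamma$. The delicate point is transferring the vanishing of $\|u^{k_{i}}-y^{k_{i}}\|$ through the Tseng correction so that Lemma~\ref{lem41}'s second estimate genuinely forces $z^{k_{i}}-u^{k_{i}}\to 0$ (and hence $z^{k_{i}}\rightharpoonup z$) before invoking demiclosedness, while the passage $y^{k_{i}}=P_{C}(u^{k_{i}}-\psi_{k_{i}}Au^{k_{i}})$ with $\psi_{k_{i}}\to\psi$ must be handled so that Lemma~\ref{lem21} applies at the limiting step size. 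The subsequence bookkeeping required to feed Lemma~\ref{lem23} is the genuinely technical part, whereas all the algebraic estimates become routine once Lemma~\ref{lem41} replaces Lemma~\ref{lem32}.
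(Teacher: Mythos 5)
Your proposal is correct and follows essentially the same route as the paper, which likewise proves Theorem~\ref{thm41} by repeating the four claims of Theorem~\ref{thm31} with Lemma~\ref{lem41} substituted for Lemma~\ref{lem32}. In fact you supply more detail than the paper does for Claim~4 (which it ``leaves for the reader''), correctly using the second inequality of Lemma~\ref{lem41} to get $\|z^{k_i}-y^{k_i}\|\to 0$ and invoking demiclosedness of $I-T$ to place the weak limit in $\Gamma$.
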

\begin{proof}
\noindent{\bf Claim 1.} The sequence $\{x^{k}\}$ is bounded. By Lemma \ref{lem31}, there exists a constant $k_{0} \in \mathbb{N}$ such that $1-\phi^{2} \frac{\psi_{k}^{2}}{\psi_{k+1}^{2}}>0, \forall k \geq k_{0} $. Thanks to Lemma~\ref{lem41}, one sees that
\begin{equation}\label{ee}
\|z^{k}-x^{\dag}\| \leq\|u^{k}-x^{\dag}\|,\quad \forall k \geq k_{0}\,.
\end{equation}
Using the same arguments as in the Theorem~\ref{thm31} of Claim 1, we get that $\{x^{k}\}$ is bounded. Thus, sequences $\{y^{k}\},\{z^{k}\},\{q^{k}\}$ and $\left\{(I-\sigma S) x^{k}\right\}$ are also bounded.

\noindent{\bf Claim 2.}
\[
\begin{aligned}
&\quad\varphi_{k}[1-\vartheta-\varphi_{k}]\|z^{k}-T z^{k}\|^{2}+\big(1-\phi^{2} \frac{\psi_{n^{2}}}{\psi_{k+1}^{2}}\big)\|y^{k}-u^{k}\|^{2}\\ &\leq\|x^{k}-x^{\dag}\|^{2}-\|x^{k+1}-x^{\dag}\|^{2}+\theta_{k} Q_{4}\,, \quad \forall k \geq k_{0}
\end{aligned}
\]
for some $ Q_{4}>0 $. From \eqref{eqaa}, \eqref{eqo} and  Lemma~\ref{lem41}, we can show that
\[
\begin{aligned}
\|x^{k+1}-x^{\dag}\|^2
\leq& \|z^{k}-x^{\dag}\|^{2}+\varphi_{k}[\varphi_{k}-(1-\vartheta)]\|T z^{k}-z^{k}\|^{2}+ \theta_{k}Q_{2}\\
\leq&\|x^{k}-x^{\dag}\|^{2}-\big(1-\phi^{2} \frac{\psi_{k}^{2}}{\psi_{k+1}^{2}}\big)\|y^{k}-u^{k}\|^{2}+ \theta_{k}Q_{4} \\
&+\varphi_{k}[\varphi_{k}-(1-\vartheta)]\|T z^{k}-z^{k}\|^{2}\,, \quad \forall k \geq k_{0}\,,
\end{aligned}
\]
where $Q_{4}:=Q_{2}+Q_{3}$, both $ Q_{2} $ and $ Q_{3} $ are defined in Claim 2 of Theorem~\ref{thm31}.

\noindent{\bf Claim 3.}
\begin{equation*}
\|x^{k+1}-x^{\dag}\|^2
\leq(1-\gamma \theta_{k})\|x^{k}-x^{\dag}\|^2+\gamma\theta_{k} \big[\frac{2\sigma}{\gamma}\langle S x^{\dag}, x^{\dag}-x^{k+1}\rangle+\frac{3Q\xi_{k}}{\gamma\theta_{k} }\|x^{k}-x^{k-1}\|\big],\, \forall k \geq k_{0}\,,
\end{equation*}
This result can be obtained using the same arguments as in Theorem~\ref{thm31} of Claim~3.

\noindent{\bf Claim 4.} The sequence $\{\|x^{k}-x^{\dag}\|^{2}\}$ converges to zero. The proof is similar to Claim 4 in Theorem~\ref{thm31}. We leave it for the reader to check.
\end{proof}

Now, we give a special case of Algorithm~\ref{alg2}. When $ S(x)=x-f(x) $ in Theorem~\ref{thm41}, where mapping $f: H \rightarrow H$ is $ \rho $-contraction. It can be easily verified that mapping $S: H \rightarrow H$ is $(1+\rho)$-Lipschitz continuous and $(1-\rho)$-strongly monotone. In this situation, by picking $\sigma=1$, we get a new self-adaptive inertial viscosity-type Tseng's extragradient algorithm for solving~\eqref{VIPFPP}. Similar to corollary~\ref{coro1}, we can get the following results immediately.
\begin{corollary}\label{coro2}
Suppose that mapping $A: H \rightarrow H$ is $L$-Lipschitz continuous monotone, mapping $T: H \rightarrow H$ is $\vartheta$-demicontractive such that $(I-T)$ is demiclosed at zero and mapping $f:~{H} \rightarrow {H}$ is $ \rho $-contractive with $\rho \in[0, \sqrt{5}-2)$. Give  $ \xi>0 $, $\psi_{1}>0$, $\phi \in(0,1)$. Let sequence $ \{\zeta_{k}\} $ be positive numbers such that $\lim_{k \rightarrow \infty} \frac{\zeta_{k}}{\theta_{k}}=0$, where $ \{\theta_{k}\}\subset (0,1) $ satisfies   $\sum_{k=0}^{\infty} \theta_{k}=\infty $ and $\lim _{k \rightarrow \infty} \theta_{k}=0$. Let $\left\{\varphi_{k}\right\}$ be a real sequence such that $\varphi_{k} \subset(a,1-\vartheta)$ for some $a>0$. Let $x^{0},x^{1} \in H$ and $\{x^{k}\}$ be defined by
\begin{equation}\label{r}
\left\{\begin{aligned}
&u^{k}=x^{k}+\xi_{k}(x^{k}-x^{k-1})\,,\\
&y^{k}=P_{C}(u^{k}-\psi_{k} A u^{k})\,, \\
&z^{k}=y^{k}-\psi_{k}(A y^{k}-A u^{k})\,, \\
&q^{k}=(1-\varphi_{k}) z^{k}+\varphi_{k} T z^{k}\,, \\
&x^{k+1}=(1-\theta_{k}) q^{k}+\theta_{k} f(q^{k})\,, 
\end{aligned}\right.
\end{equation}
where inertial parameter $ \xi_{k} $ and step size $ \psi_{k} $ are defined \eqref{alpha} and \eqref{lambda2}, respectively. Then the iteration sequence $\{x^{k}\}$ formulated by \eqref{r} converges to $x^{\dag} \in \Omega \cap \Gamma$ in norm, where $x^{\dag}=P_{\Omega \cap \Gamma} \circ f(p)$.
\end{corollary}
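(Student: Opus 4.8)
The plan is to obtain Corollary~\ref{coro2} as an immediate specialization of Theorem~\ref{thm41}: the whole task reduces to certifying that the choice $S=I-f$ together with $\sigma=1$ meets every hypothesis of that theorem and that the recursion \eqref{r} coincides with Algorithm~\ref{alg2} under this choice. First I would confirm the mapping properties of $S$. For any $x,y\in H$, the triangle inequality and the $\rho$-contractivity of $f$ give $\|Sx-Sy\|\le\|x-y\|+\|f(x)-f(y)\|\le(1+\rho)\|x-y\|$, so $S$ is $(1+\rho)$-Lipschitz; likewise $\langle Sx-Sy,x-y\rangle=\|x-y\|^{2}-\langle f(x)-f(y),x-y\rangle\ge(1-\rho)\|x-y\|^{2}$ by Cauchy--Schwarz, so $S$ is $(1-\rho)$-strongly monotone. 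Thus $S$ supplies the strong-monotonicity and Lipschitz requirement (C4) with $\eta=1-\rho$ and $k=1+\rho$.

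The one point that requires a genuine, if short, computation, and the place I expect the only real obstacle to lie, is verifying that $\sigma=1$ is admissible, i.e. that $\sigma\in(0,2\eta/k^{2})$. With the constants just found this demands $1<\dfrac{2(1-\rho)}{(1+\rho)^{2}}$, equivalently $(1+\rho)^{2}<2(1-\rho)$, i.e. $\rho^{2}+4\rho-1<0$. Solving this quadratic shows the condition is exactly $\rho\in[0,\sqrt{5}-2)$, which is precisely the hypothesis imposed on $f$ in the statement. Hence the step-size constraint $\sigma<2\eta/k^{2}$ of Theorem~\ref{thm41} holds for $\sigma=1$; moreover $0<\eta\le k$ (indeed $\eta=1-\rho>0$ and $k-\eta=2\rho\ge0$), so Lemma~\ref{lem20} applies and produces a contraction constant $\gamma\in(0,1)$.

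Next I would check that \eqref{r} is literally Algorithm~\ref{alg2} for $S=I-f$ and $\sigma=1$. The inertial step, the projection step $y^{k}$ and the Tseng step $z^{k}$ are identical; only the final step changes, and there $x^{k+1}=(1-\sigma\theta_{k}S)q^{k}=q^{k}-\theta_{k}\big(q^{k}-f(q^{k})\big)=(1-\theta_{k})q^{k}+\theta_{k}f(q^{k})$, which is exactly the last line of \eqref{r}. Since $A$, $T$, $\{\zeta_{k}\}$, $\{\theta_{k}\}$ and $\{\varphi_{k}\}$ satisfy Conditions~\ref{con1}, \ref{con2} and~\ref{con5} by hypothesis (with $\Omega\cap\Gamma\neq\emptyset$ as the standing assumption~\ref{con3}), and $S=I-f$ furnishes (C4), all five conditions of Theorem~\ref{thm41} hold.

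Finally I would identify the limit. Theorem~\ref{thm41} yields strong convergence of $\{x^{k}\}$ to the unique fixed point $x^{\dag}=P_{\Omega\cap\Gamma}(I-\sigma S)x^{\dag}$. Substituting $\sigma=1$ and $S=I-f$ gives $(I-\sigma S)x^{\dag}=(I-(I-f))x^{\dag}=f(x^{\dag})$, so the limit satisfies $x^{\dag}=P_{\Omega\cap\Gamma}\circ f(x^{\dag})$, which is the assertion of the corollary (the ``$p$'' in the displayed fixed-point relation being $x^{\dag}$). This completes the reduction, and no separate convergence analysis is required.
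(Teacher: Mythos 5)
Your proposal is correct and follows exactly the paper's own route: the paper likewise obtains the corollary by setting $S=I-f$, noting that $S$ is $(1+\rho)$-Lipschitz and $(1-\rho)$-strongly monotone, choosing $\sigma=1$, and invoking Theorem~\ref{thm41}. The only difference is that you spell out the admissibility check $1<2(1-\rho)/(1+\rho)^{2}\Leftrightarrow\rho\in[0,\sqrt{5}-2)$, which the paper leaves implicit but which is precisely the reason for that hypothesis.
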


\begin{remark}
\begin{enumerate}
\item Set $ S(x)=x-f(x) $ in Theorem~\ref{thm31} and select $ S(x)=x-x^{0} $ in Theorem~\ref{thm41}. We can get two new algorithms to seek the common solution of problem \eqref{VIP} and problem \eqref{FPP}. Note that these algorithms all obtain strong convergence results in Hilbert spaces. Furthermore, they can work without the prior information about the Lipschitz constant of the operator.
\item The algorithms proposed in this paper improve and extend some recent results in the literature~\cite{KS,TVNA,STEGM}. Both of our algorithms embed inertial terms and use new iteration steps, which makes them faster and more flexible. In addition, it is worth noting that $ T $ in Algorithms~\eqref{HSEGM} and \eqref{STEGM} is a quasi-nonexpansive mapping, but ours is a demicontractive mapping. Therefore, our algorithms have a wider range of applications.
\end{enumerate}
\end{remark}
\section{Numerical examples}\label{sec4}
In this section, we provide some computational tests to illustrate the  numerical behavior of our proposed algorithms (Algorithm~\ref{alg1} (iSTEGM), Algorithm~\ref{alg2} (iSSEGM)) and compare them with some existing strong convergence methods, including the Halpern subgradient extragradient method \eqref{HSEGM} \cite{KS}, the viscosity-type subgradient extragradient method \eqref{VSEGM} \cite{TVNA}, the viscosity-type Tseng's extragradient method \eqref{VTEGM} \cite{TVNA}, and the self-adaptive Tseng's extragradient method \eqref{STEGM} \cite{STEGM}. We use the FOM Solver~\cite{FOM} to effectively calculate the projections onto $ C $ and $ H_{k} $. All the programs were implemented in MATLAB 2018a on a Intel(R) Core(TM) i5-8250T CPT @ 1.60GHz computer with RAM 8.00 GB.

Our parameters are set as follows. In all algorithms, set $ \theta_{k}=1/(k+1) $ and $ \varphi_{k}=k/(2k+1) $. For the proposed algorithms and the algorithms \eqref{VSEGM} and \eqref{VTEGM}, we choose $ \psi_{1}=0.9 $, $ \phi=0.5 $. Setting $ f(x)=0.5x $ in the algorithms \eqref{VSEGM} and \eqref{VTEGM}. Take $ \sigma=0.5 $, $ \xi=0.4 $, $ \zeta_{k}=1/(k+1)^2 $ in our proposed algorithms. For the algorithm \eqref{STEGM}, we select $ \alpha=0.5 $, $ \ell=0.5 $, $ \phi=0.4 $ and $ \sigma=0.5 $.  For the algorithm \eqref{HSEGM}, we pick out the step size as $ \psi= 0.99/L$. In our numerical examples, when the number of iterations is the same, we use the runtime in seconds to measure the computational performance of all algorithms. In addition, the solution $ x^{*} $ of the problems are known. Thus, we use the function $ D_{k}=\|x^{k}-x^{*}\| $ to measure the $ k $-th iteration error. Obviously, $ D_{k}=0 $ means that $ x^{k} $ converges to $ x^{*} $, which can be regarded as an approximate solution to the problems.

\begin{example}\label{ex0}
In first example, we consider a simple two-dimensional numerical test. Let the nonlinear mapping $A: \mathbb{R}^{2} \rightarrow \mathbb{R}^{2}$ be defined as follows:
\[
A(x, y)=(x+y+\sin x ;-x+y+\sin y)\,.
\]
It is easy to verify that mapping $A$ is Lipschitz continuous monotone with modulus $ L=3 $. Assume that the feasible set $C$ is a two-dimensional box with lower bounds $ l_{i}=[-1;-1] $ and upper bounds $ u_{i}=[1;1] $. Then the projection of a point $ x_{i} \in \mathbb{R}^{2} $ on this box can be calculated explicitly by the following formula: $ P_{C} (x)_{i} = \min\{ u_{i}, \max \{ l_{i}, x_{i}\}\}$. Moreover, the mapping $ T: \mathbb{R}^{2} \rightarrow \mathbb{R}^{2} $ is defined by $T x=\|D\|^{-1} D x$, where $ D $ is a second-order matrix, defined as $ D=[1, 0; 0, 2] $. The mapping $ S: \mathbb{R}^{2} \rightarrow \mathbb{R}^{2} $ is selected as $ Sx = 0.5 x $. It can be easily seen that that mapping $ T $ is $0$-demicontractive and the mapping $ S $ is Lipschitz continuous and strongly monotone. We can easily find the solution of this problem as $x^{*}=(0,0)^{\mathsf{T}} $.  In order to verify the effectiveness of the algorithm, we select four different initial values $ x^{0} = x^{1} $ in MATLAB, namely, (Case~I): $ x^{1} = \emph{rand(2,1)} $, (Case~II): $ x^{1} = \emph{5rand(2,1)} $, (Case~III): $ x^{1} = \emph{10rand(2,1)} $, (Case~IV): $ x^{1} = \emph{20rand(2,1)} $, and the maximum iteration $ 400 $ as the common stop criterion. The numerical results are plotted in Figs.~\ref{figEX1_1}--\ref{figEX1_20}.
\begin{figure}[htbp]
	\centering
	\includegraphics[scale=0.5]{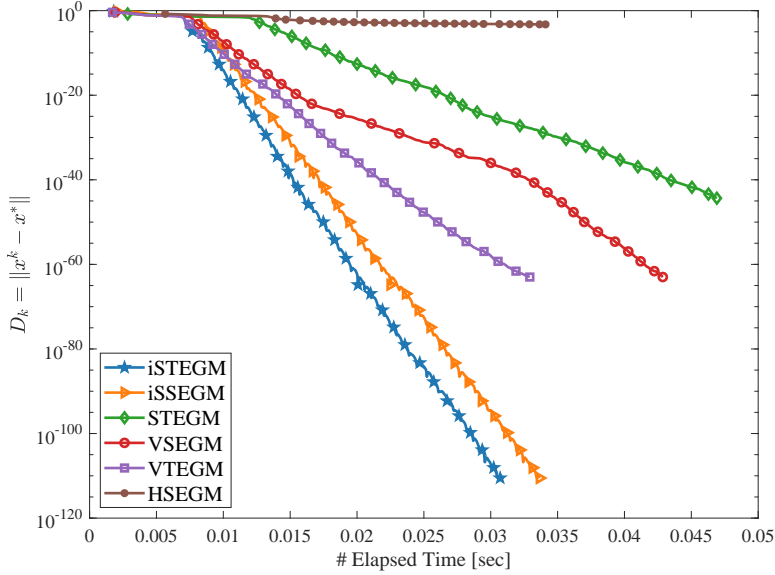}
	\caption{Numerical results of Example~\ref{ex0} for $ x^{1} = \emph{rand(2,1)} $}
	\label{figEX1_1}
\end{figure}
\begin{figure}[htbp]
	\centering
	\includegraphics[scale=0.5]{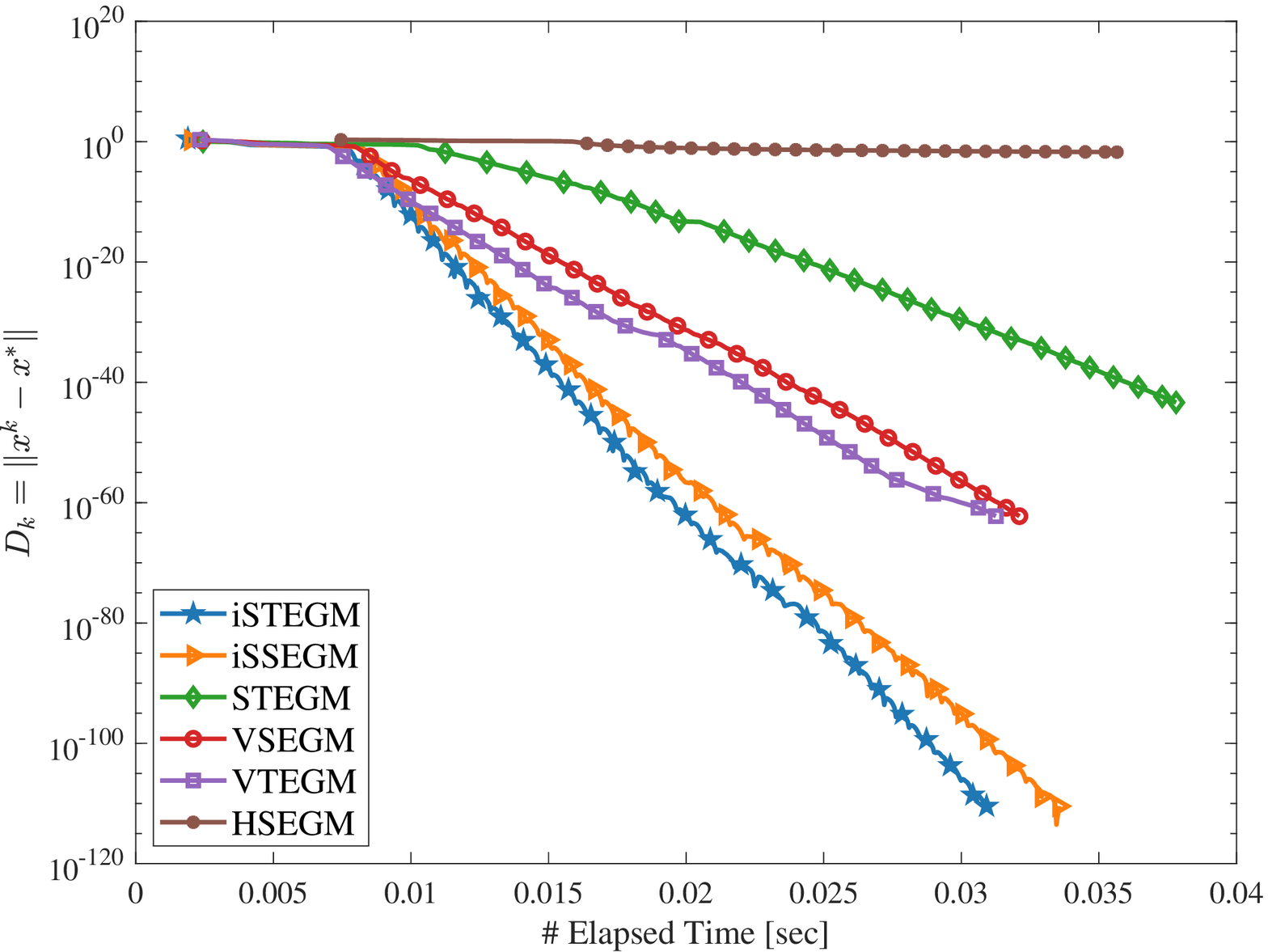}
	\caption{Numerical results of Example~\ref{ex0} for $ x^{1} = \emph{5rand(2,1)} $}
	\label{figEX1_5}
\end{figure}
\begin{figure}[htbp]
	\centering
	\includegraphics[scale=0.5]{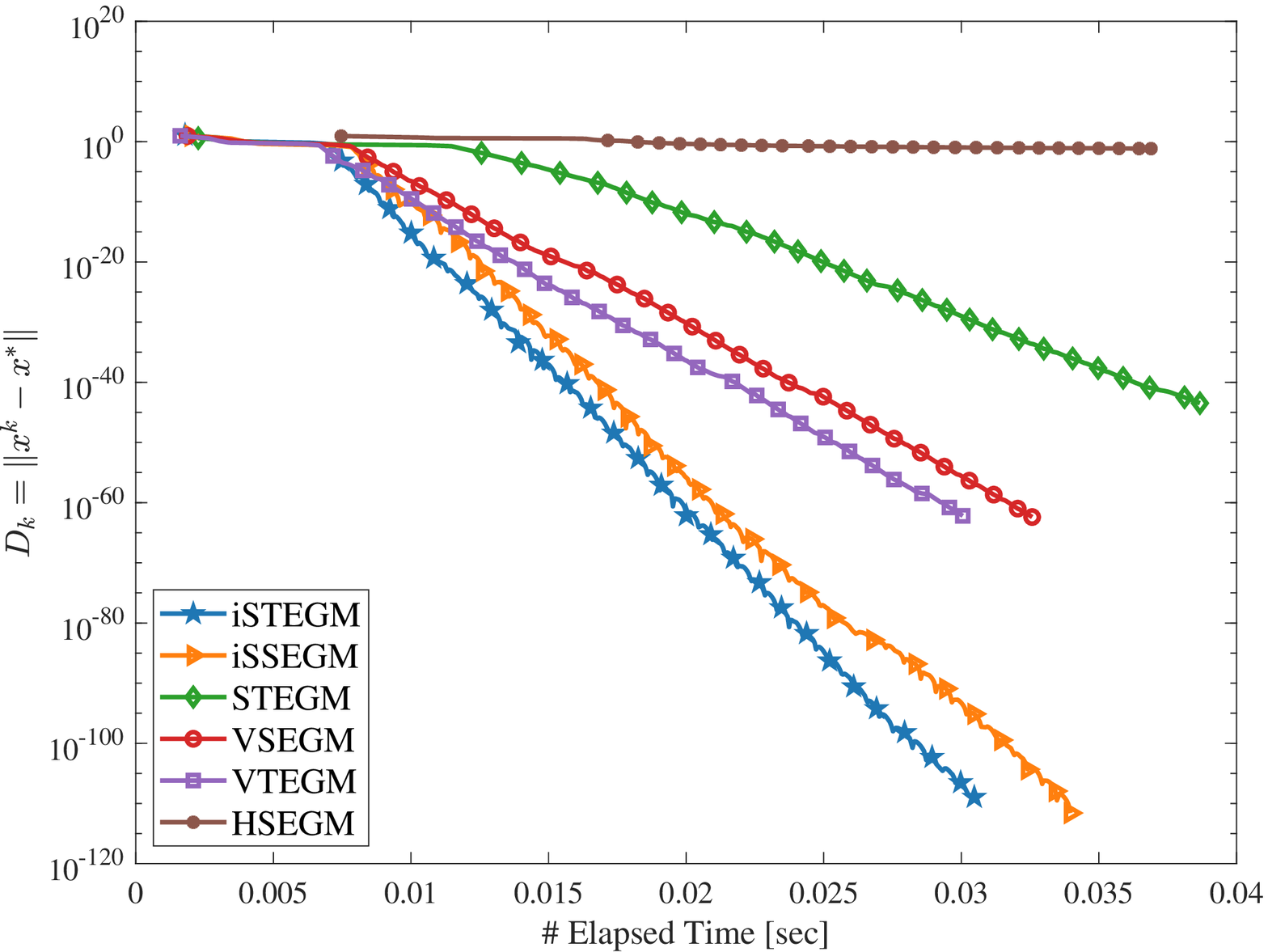}
	\caption{Numerical results of Example~\ref{ex0} for $ x^{1} = \emph{10rand(2,1)} $}
	\label{figEX1_10}
\end{figure}
\begin{figure}[htbp]
	\centering
	\includegraphics[scale=0.5]{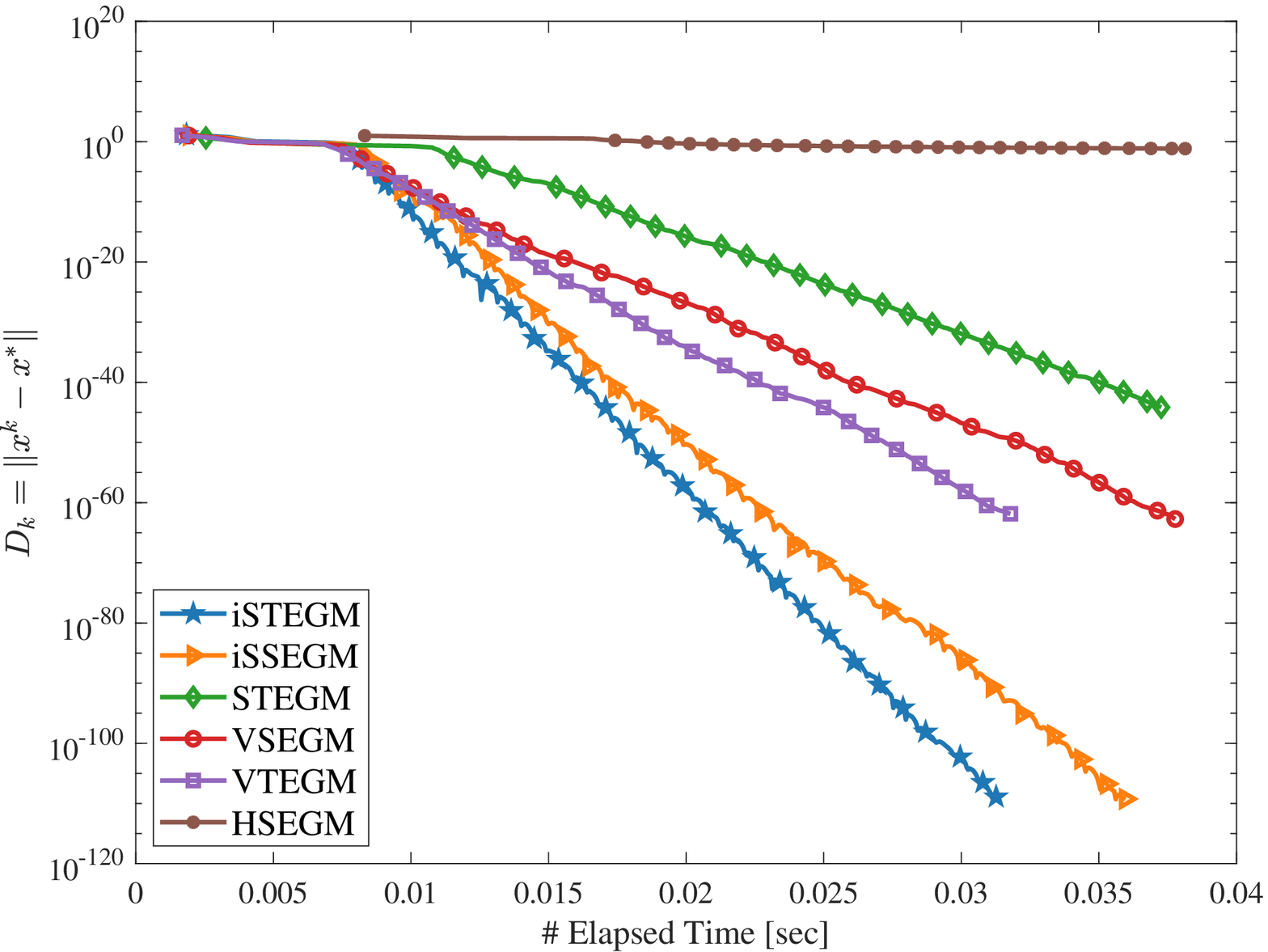}
	\caption{Numerical results of Example~\ref{ex0} for $ x^{1} = \emph{20rand(2,1)} $}
	\label{figEX1_20}
\end{figure}
\end{example}

\begin{example}\label{ex1}
In the second example, we consider the form of linear operator $ A: R^{n}\rightarrow R^{n} $ ($ n=100, 200 $) as follows: $A(x)=Gx+g$, where $g\in R^n$ and $G=BB^{\mathsf{T}}+M+E$, matrix $B\in R^{n\times n}$, matrix $M\in R^{n\times n}$ is skew-symmetric, and matrix $E\in R^{n\times n}$ is diagonal matrix whose diagonal terms are non-negative (hence $ G $ is positive symmetric definite). We choose the feasible set as $C=\left\{x \in {R}^{n}:-2 \leq x_{i} \leq 5, \, i=1, \ldots, n\right\}$.  It can be easily checked that mapping $A$ is Lipschitz continuous monotone and its Lipschitz constant $ L=  \|G\| $.  In this numerical example, both $B, E$ entries are randomly created in $[0,2]$, $M$ is generated randomly in $[-2,2]$ and $ g = 0 $. Let $T: H \rightarrow H$ and $S: H \rightarrow H$ be provided by $T x=0.5 x$ and $S x=0.5 x$, respectively.  We obtain the solution to the problem is $ x^{*}=\{\mathbf{0}\} $. The maximum iteration $ 400 $ as a common stopping criterion and the initial values $ x^{0} = x^{1} $ are randomly generated by \emph{rand(n,1)} in MATLAB. The numerical results with elapsed time are described in Figs.~\ref{fig50_time}--\ref{fig200_time}.
\begin{figure}[htbp]
\centering
\includegraphics[scale=0.5]{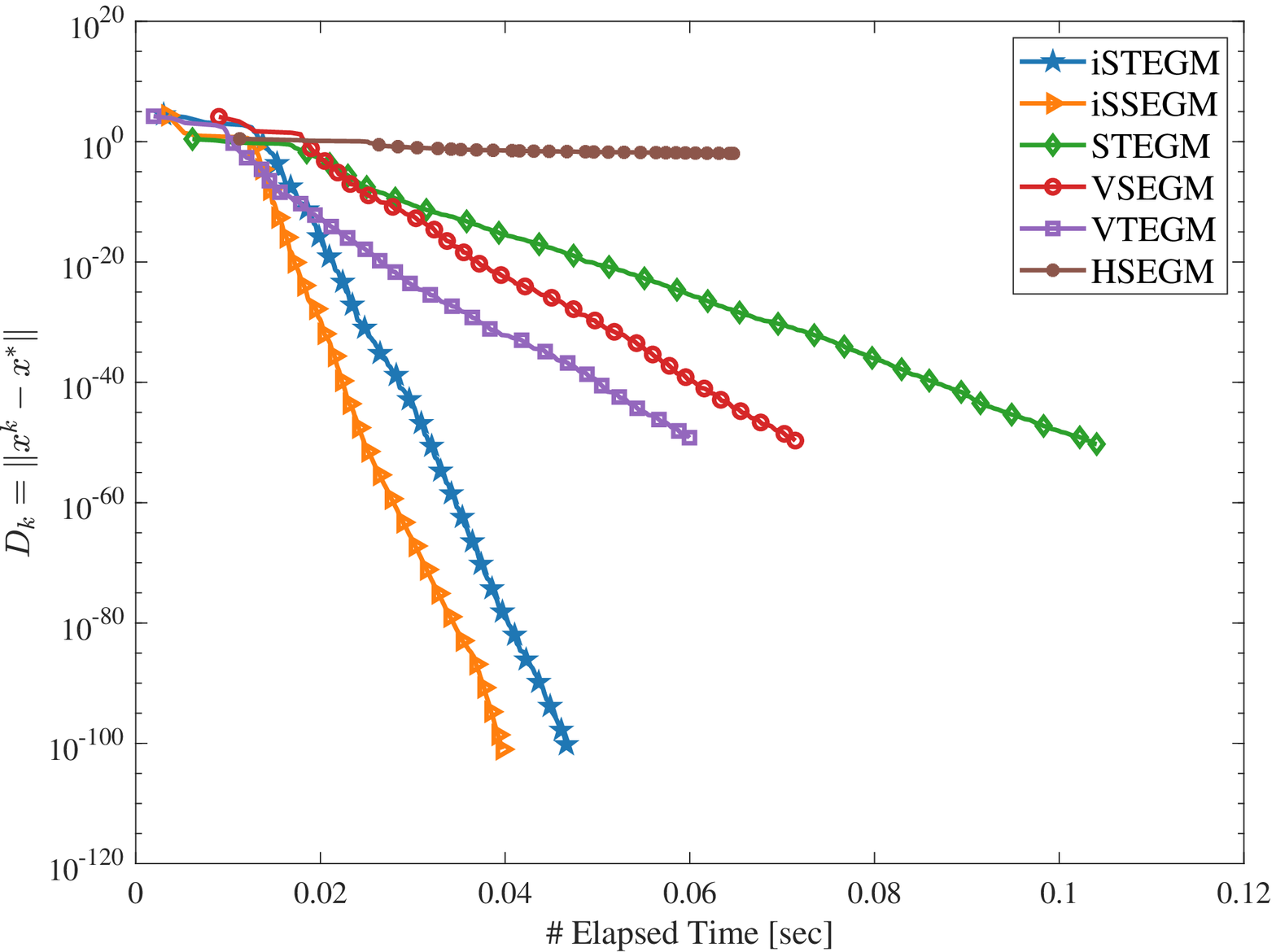}
\caption{Numerical results of Example~\ref{ex1} when $ n=50 $}
\label{fig50_time}
\end{figure}
\begin{figure}[htbp]
\centering
\includegraphics[scale=0.5]{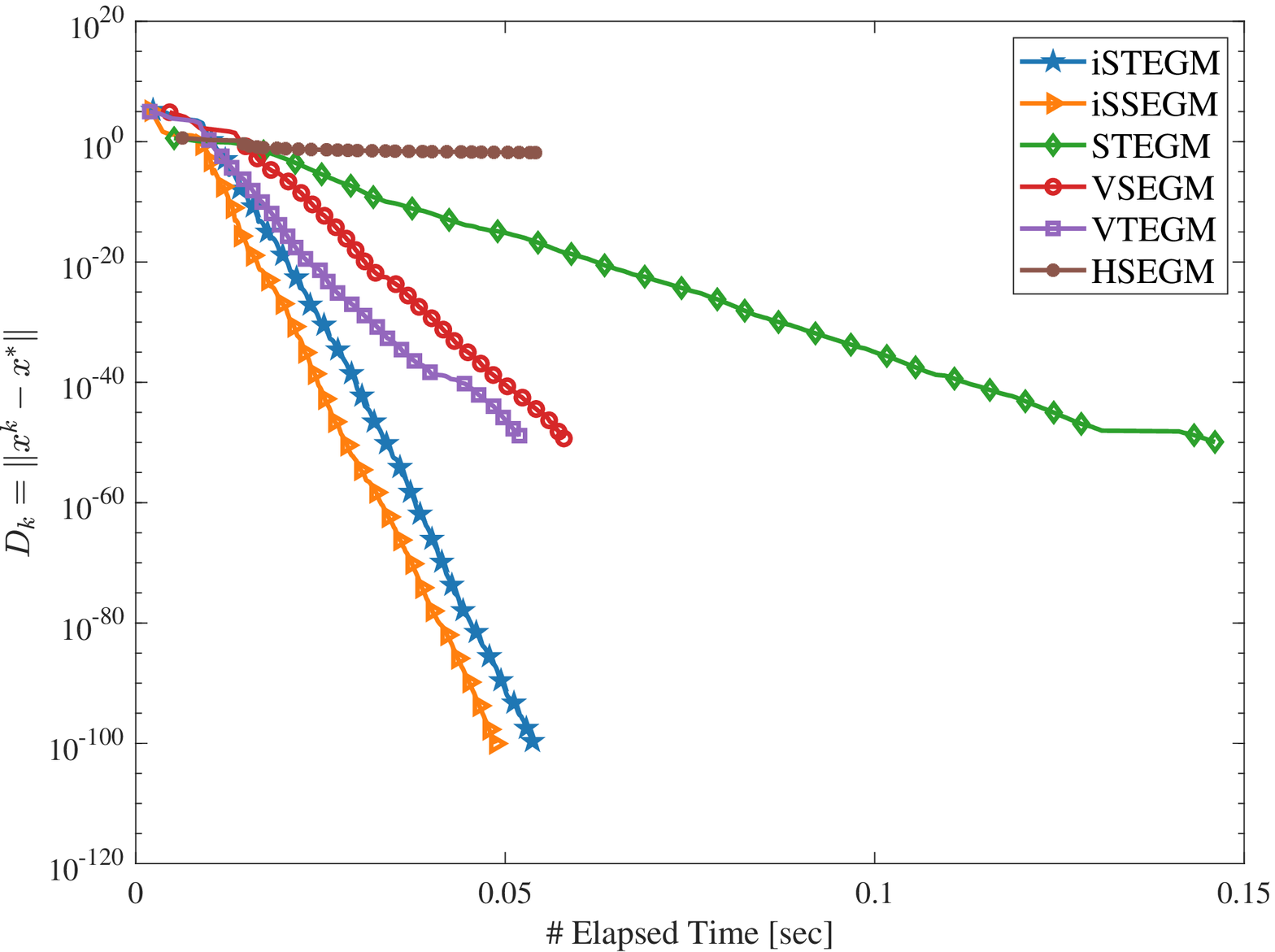}
\caption{Numerical results of Example~\ref{ex1} when $ n=100 $}
\label{fig100_time}
\end{figure}
\begin{figure}[htbp]
\centering
\includegraphics[scale=0.5]{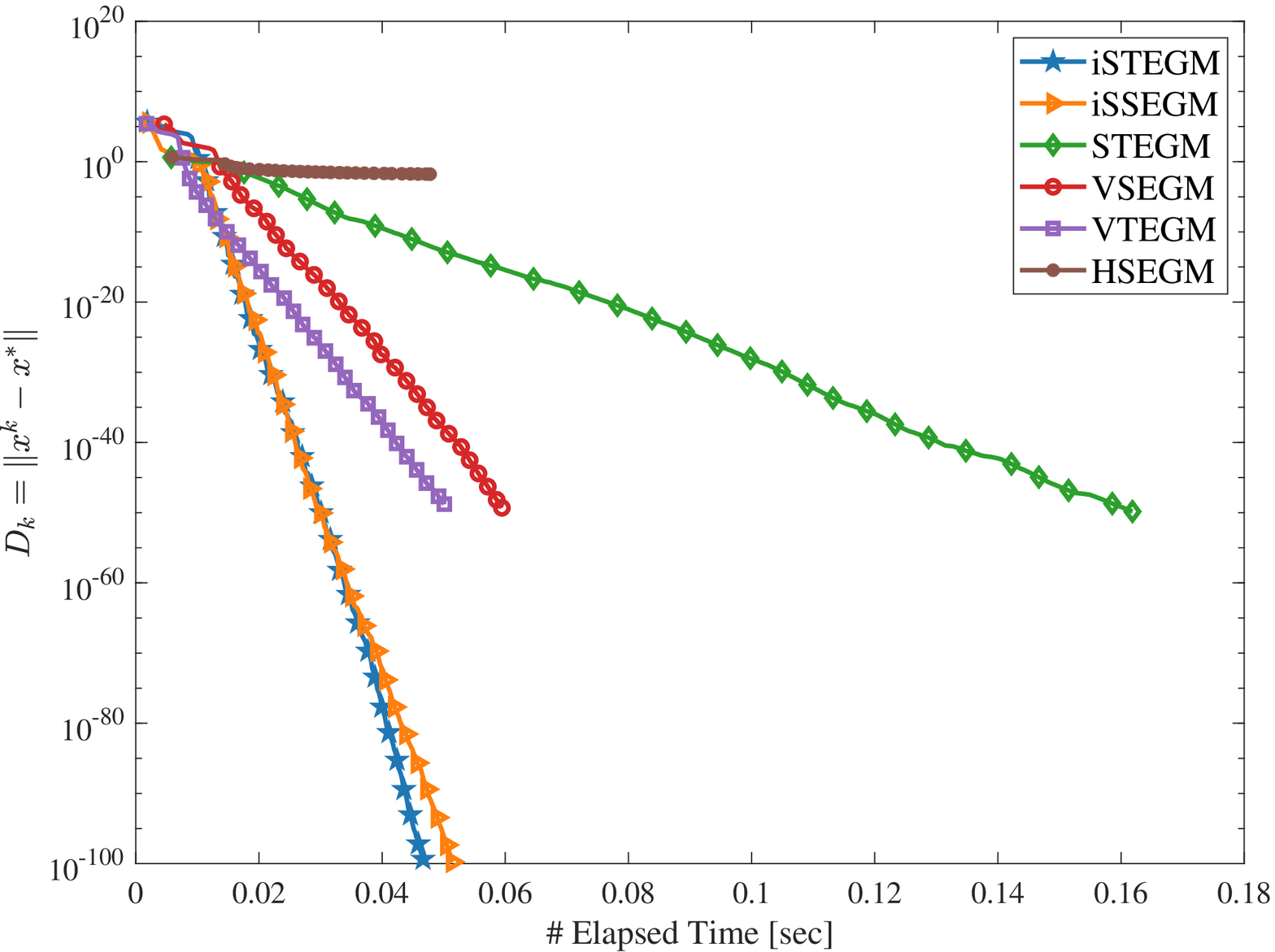}
\caption{Numerical results of Example~\ref{ex1} when $ n=150 $}
\label{fig150_time}
\end{figure}
\begin{figure}[htbp]
\centering
\includegraphics[scale=0.5]{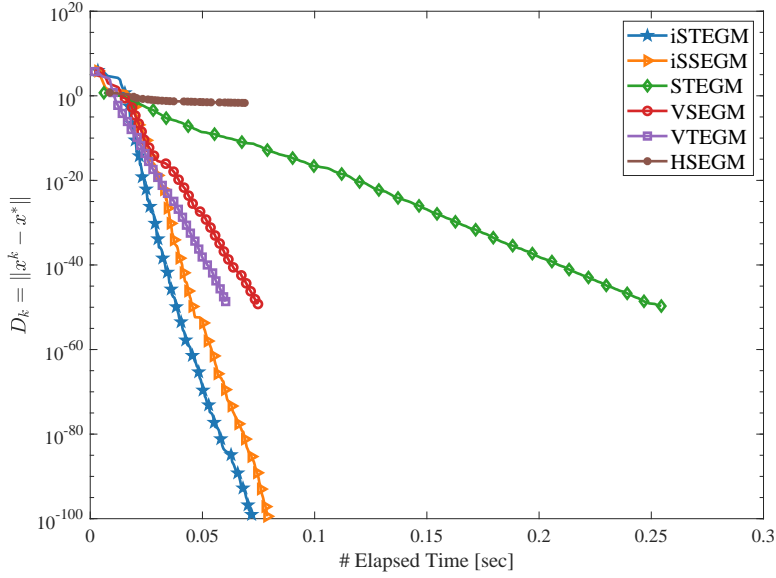}
\caption{Numerical results of Example~\ref{ex1} when $ n=200 $}
\label{fig200_time}
\end{figure}
\end{example}

\begin{example}\label{ex2}
In this numerical example, we focus on a case in Hilbert space $H=L^{2}([0,1])$. Its inner product and induced norm are defined as $\langle x, y\rangle:=\int_{0}^{1} x(t) y(t) \mathrm{d} t$ and $\|x\|:=(\int_{0}^{1}|x(t)|^{2} \mathrm{d} t)^{1 / 2}$, respectively. The unit ball $C:=\{x \in H:\|x\| \leq 1\}$ as the feasible set. Let the operator $A: C \rightarrow H$ be generated as follows:
\[
(A x)(t)=\max \{0, x(t)\}=\frac{x(t)+|x(t)|}{2}\,.
\]
It can be easily verified that $ A $ is monotone and Lipschitz continuous with modulus $ L=1 $. Moreover, the projection onto the feasible set $ C $ is explicit, and we can use the following formula to calculate the projection:
\[
P_{C}(x)=\left\{\begin{array}{ll}
\frac{x}{\|x\|_{L^{2}}}, & \text { if }\|x\|_{L^{2}}>1; \\
x, & \text { if }\|x\|_{L^{2}} \leq 1.
\end{array}\right.
\]
We choose the mapping $T: L^{2}([0,1]) \rightarrow L^{2}([0,1])$ is of form $ (T x)(t)=\int_{0}^{1} t x(s) d s, t \in[0,1] $. A simple computation indicates that $T$ is $ 0 $-demicontractive and demiclosed at zero. Let mapping $S: H \rightarrow H$ be taken as $(S x)(t)=0.5 x(t),  t \in[0,1]$.  It can be easily proved that the mapping $ S $ is strongly monotone and Lipschitz continuous. The solution to this problem is $x^{*}(t)=0 $. Our parameter settings are the same as in Example~\ref{ex1}, and the maximum iteration $ 50 $ is used as the stopping criterion. With four types of starting points: (Case~I) $x^{0}(t)=x^{1}(t)=t^2$, (Case~II) $ x^{0}(t)=x^{1}(t)=2^t $, (Case~III) $ x^{0}(t)=x^{1}(t)=e^t $ and (Case~IV) $x^{0}(t)=x^{1}(t)=t+0.5\cos(t)$.  The numerical behaviors of $D_{k}=\|x^{k}(t)-x^{*}(t)\|$ formulated by all the algorithms are shown in Figs.~\ref{figt2_time}--\ref{figcosq_time}.
\begin{figure}[htbp]
\centering
\includegraphics[scale=0.5]{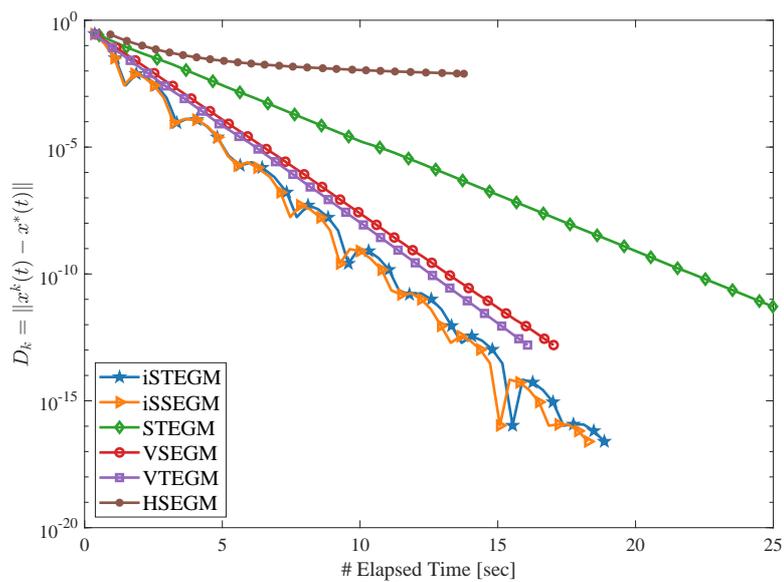}
\caption{Numerical results of Example~\ref{ex2} when $ x^{0}(t)=x^{1}(t)=t^2 $}
\label{figt2_time}
\end{figure}
\begin{figure}[htbp]
\centering
\includegraphics[scale=0.5]{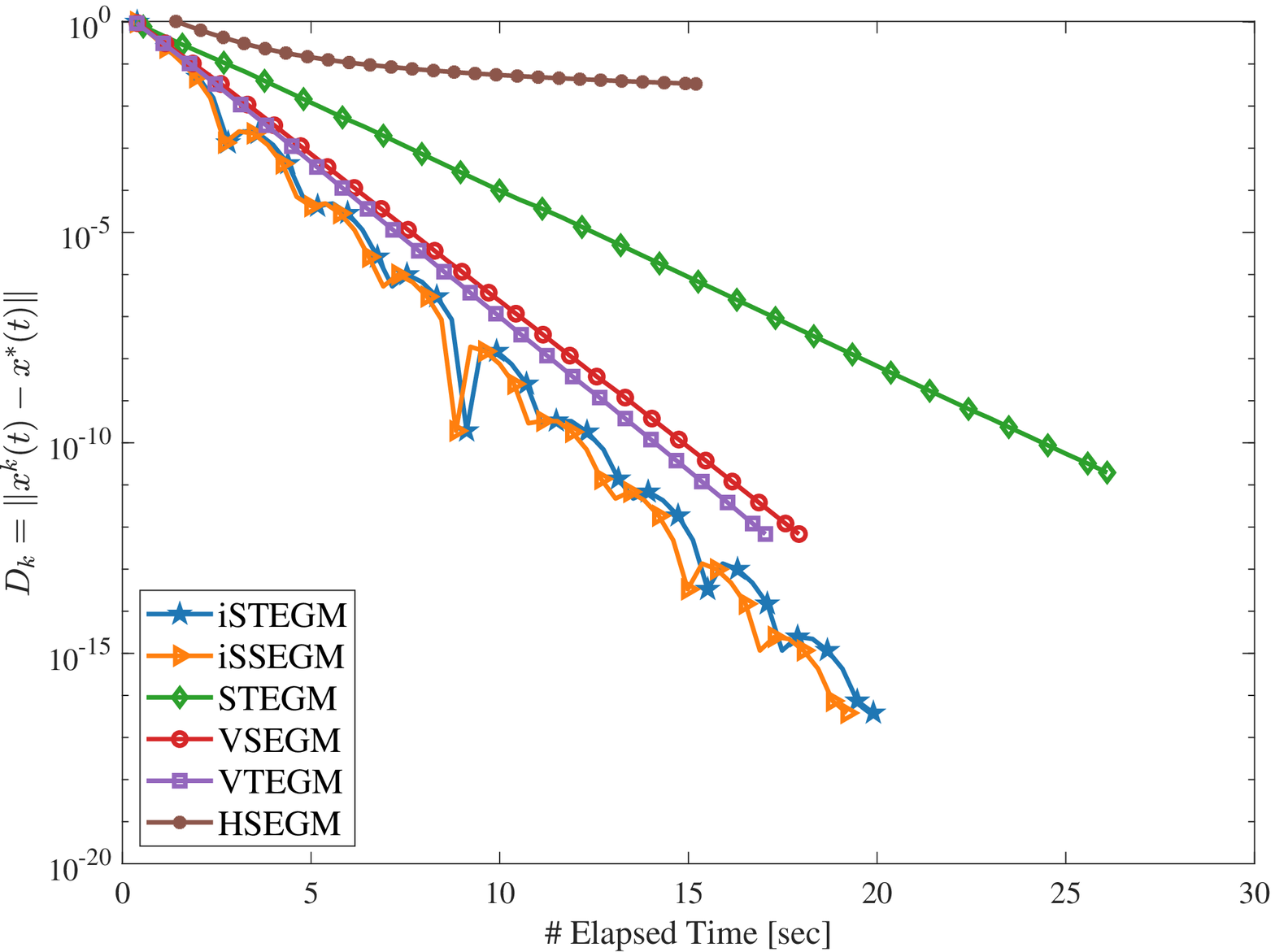}
\caption{Numerical results of Example~\ref{ex2} when $ x^{0}(t)=x^{1}(t)=2^t $}
\label{fig2t_time}
\end{figure}
\begin{figure}[htbp]
\centering
\includegraphics[scale=0.5]{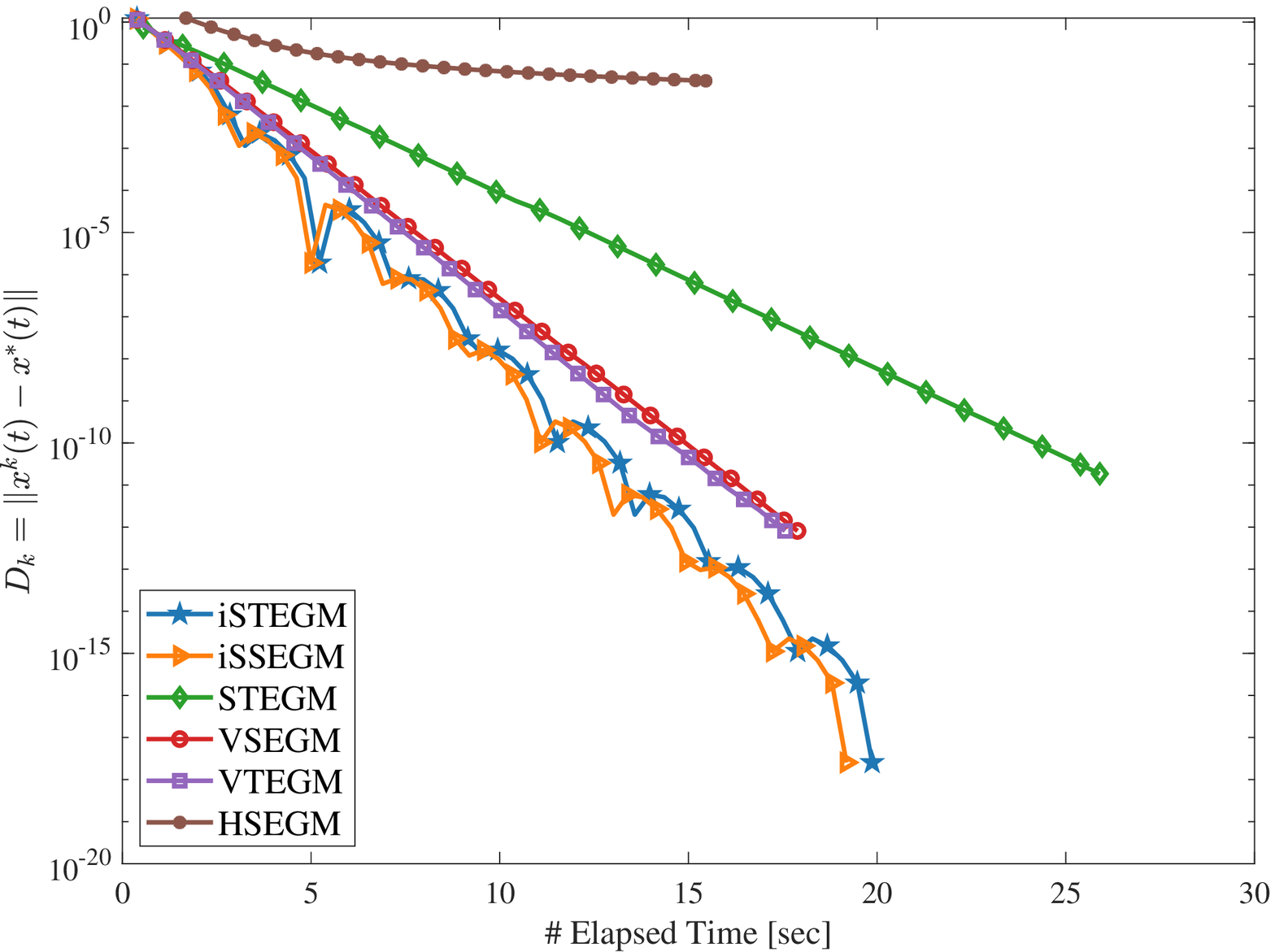}
\caption{Numerical results of Example~\ref{ex2} when $ x^{0}(t)=x^{1}(t)=e^t $}
\label{figet_time}
\end{figure}
\begin{figure}[htbp]
\centering
\includegraphics[scale=0.5]{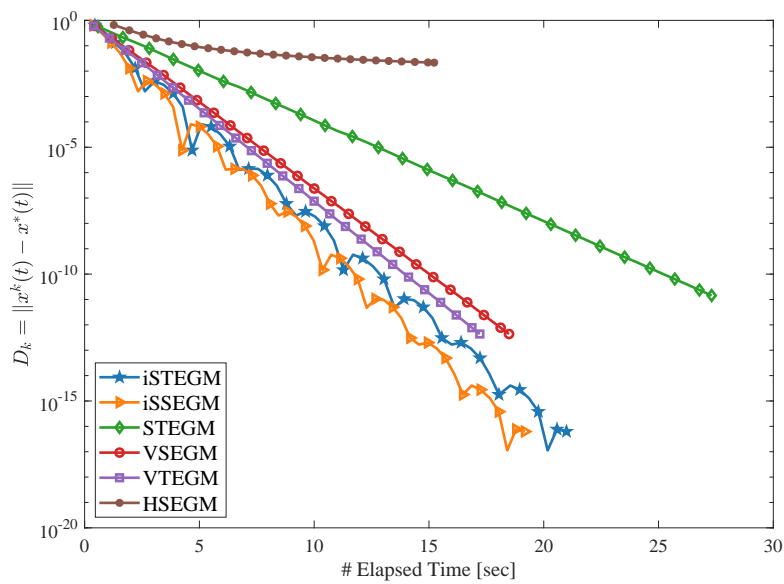}
\caption{Numerical results of Example~\ref{ex2} when $ x^{0}(t)=x^{1}(t)=t+0.5\cos(t) $}
\label{figcosq_time}
\end{figure}
\end{example}

\begin{remark}
\begin{enumerate}
\item From Figs.~\ref{figEX1_1}--\ref{figcosq_time}, we know that our proposed algorithms outperformance some existing algorithms in the literature. These results are independent of the selection of initial values and the size of dimensions. Note that our algorithms converge very quickly, and there are still some oscillations since the inertial effect. 
\item The maximum number of iterations we choose is only $ 400 $. It should be noted that the iteration error of Algorithm~\eqref{HSEGM} is very big. In actual applications, it may require more iterations to meet the accuracy requirements. Furthermore, we point out that since the Algorithm~\eqref{STEGM} uses the Armijo-like step size rule, which leads to taking more execution time.
\item In our future work, we will improve the generality of the operators involved, for example, consider the operator $ A $ is pseudo-monotone and uniformly continuous. We will also consider how to reduce the oscillation effect caused by the inertial term. In addition, the wide application in image processing and machine learning deserves further consideration.
\end{enumerate}
\end{remark}

\section{Conclusions}\label{sec5}
In this study, we investigated the problem of seeking a common solution to the variational inequality problem involving monotone and Lipschitz continuous mapping and the fixed point problem with a demicontractive mapping. We proposed two inertial extragradient methods with new step sizes to compute the approximate solutions of problems in a Hilbert space. The strong convergence of the suggested methods is established under standard and suitable conditions.  Finally, some computational tests are given to explain our convergent results.  Our algorithms obtained in this paper improve and summarizes some of the recent results in the literature.

\end{document}